\def \N{{\mathbb N}}
\def \R{{\mathbb R}}
\def \C{{\mathbb C}}
\def \1{{\mathbb 1}}
\theoremstyle{plain}
\newtheorem{theorem}{Theorem}
\newtheorem{proposition}{Proposition}
\newtheorem{definition}{Definition}
\newtheorem{lemma}{Lemma} 
\newtheorem{corollary}{Corollary}
\theoremstyle{remark}
\newtheorem{remark}{Remark}
\newtheorem{Exemp}{Example}
\begin{document}
\date{\today}
\title[Numerical radius]{Vector-valued numerical radius and $\sigma$-porosity.}
\author{Mohammed Bachir}

\address{Laboratoire SAMM 4543, Université Paris 1 Panthéon-Sorbonne, France.}
\email{Mohammed.Bachir@univ-paris1.fr}

\maketitle
%\date{Received , 2016; received in final form , 2017} 

%\thanks{Partially supported by grant DGICYT PB98-0618}

\begin{abstract} It is well known that under certain conditions on a Banach space $X$, the set of bounded linear operators attaining their numerical radius is a dense subset. We prove in this paper that if $X$ is assumed to be uniformly convex and uniformly smooth then the set  of bounded linear operators attaining their numerical radius is not only a dense subset but also the complement of a $\sigma$-porous subset.  In fact, we generalize  the notion of numerical radius to a large class $\mathcal{Z}$ of vector-valued operators defined from $X\times X^*$ into a Banach space $W$  and we prove that the set of all elements of $\mathcal{Z}$ strongly (up to a symmetry) attaining their {\it numerical radius} is the complement of a $\sigma$-porous subset of $\mathcal{Z}$ and moreover  the {\it "numerical radius"} {\it Bishop-Phelps-Bollob\'as property}  is also satisfied for  this class. Our results extend  (up to the assumption on $X$) some known results in several directions: $(1)$ the density is replaced by being the complement of a  $\sigma$-porous subset, $(2)$ the operators attaining their {\it numerical radius} are replaced by operators strongly (up to a symmetry) attaining their {\it numerical radius} and $(3)$ the results are obtained in the vector-valued framework for general linear and non-linear vector-valued operators (including bilinear mappings and the classical space of bounded linear operators).
\end{abstract}

%%%%%%%%%%%%%%%%%%%%%%%%%%%%%%%%%%%%%%%%%%%%%%%%%%%%%%%%%%%%%%%%%%%%%%%%%
% Macros
%%%%%%%%%%%%%%%%%%%%%%%%%%%%%%%%%%%%%%%%%%%%%%%%%%%%%%%%%%%%%%%%%%%%%%%%%

\newcommand\sfrac[2]{{#1/#2}}

\newcommand\cont{\operatorname{cont}}
\newcommand\diff{\operatorname{diff}}

%%%%%%%%%%%%%%%%%%%%%%%%%%%%%%%%%%%%%%%%%%%%%%%%%%%%%%%%%%%%%%%%%%%%%%%%%
{\bf Keywords and phrases:} Numerical radius, Bishop-Phelps-Bollob\'as theorem, Linear and non-linear vector-valued operators, $\sigma$-porosity.

{\bf 2020 Mathematics Subject Classiﬁcation:} \subjclass{Primary 47A12, Secondary 46A32, 46E40}

\section{Introduction} 

In all this paper, $X$ denotes a real  Banach space, $S_X$, $B_X$ the unit sphere and the closed unit ball of $X$ respectively and $X^*$ the topological dual of $X$.  In this paper we will restrict ourselves to the case of real Banach spaces without treating the complex case in order to avoid technical difficulties.  However, to the best of our knowledge, there should not be a heavy obstacle to extend the results to the complex case. Recall that a bounded linear operator $T$ from $X$ into itself ($T \in \mathcal{L}(X)$) attains the numerical radius if there are $x_0\in S_X$, $x^*_0 \in S_{X^*}$ such that $x^*_0(x_0)=1$ and $|x^*_0(T(x_0))|=\sup\lbrace |x^*(T(x))|: (x,x^*)\in \Pi(X)\rbrace$, where $$\Pi(X)=\lbrace (x,x^*)\in S_X\times S_{X^*}, x^*(x)=1  \rbrace.$$ 

As in the case of operators attaining their norm, the question whether the set of operators attaining their numerical radius is dense in $\mathcal{L}(X)$ has attracted the attention of many authors. For instance, I. Berg and B. Sims  \cite{BS} proved that such denseness holds for uniformly convex Banach spaces.  C.S. Cardassi proved denseness for classical Banach spaces as $ c_0$, $C(K)$ and $L^1(\mu)$ as well as for uniformly smooth Banach spaces (see \cite{Car1, Car2, Car3}). There are many other studies on numerical radius attaining operators, for example the works by M.D. Acosta and R. Pay\'a or A. Capel, M. Mart\'in and J. Mer\'i (see \cite{Aco1, Aco2, Aco3, Aco4, CMM, ACKLM} and references therein).  For the study of the {\it Bishop-Phelps-Bollob\'as property} we refer to the work of M. D. Acosta, R. M. Aron, D. Garc\'ia, M. Maestre  in \cite{AAGM} (see also \cite{Aco}) and the work by J. Guirao and O. Kozhushkina  in \cite{GK} (see also \cite{KLM, AFS, GMMR}). More recent results, replacing the density of operators attaining their norm by being  the complement of  $\sigma$-porous was established in \cite{Ba} (see also the recent paper \cite{JMZ} for residuality).

\vskip5mm

The main result of this article is Theorem \ref{thmprin}, it will allow to give extensions both on the direction of operators attaining their numerical radius (Corollary \ref{Cor1}) and operators attaining their norm (Corollary \ref{CorA} and Corollary \ref{CorB}).

To the best of our knowledge, it is not known in the literature if we can replace the density of the set of operators attaining their numerical radius by being the complement of a $\sigma$-porous set. However, we will show in this paper that for real Banach spaces which are both uniformly convex and uniformly smooth, the density can effectively be replaced by  being the complement of a $\sigma$-porous set. In fact, we prove a more general result and obtain as a consequence the following corollary (Corollary ~\ref{Cor1} below). Let $X$  and $Y$ be Banach spaces. Let $\theta : S_X \times S_{Y^*} \to \R$ be a non-zero function  satisfying the following properties:

$(a)$ $\theta$ is uniformly continuous on $S_X \times S_{Y^*}$.

$(b)$ $\theta(-x, y^*)=\theta(x,-y^*)=-\theta(x,y^*)$, for all $(x,y^*)\in S_X \times S_{Y^*}$.

$(c)$ $\Pi_\theta(X,Y)\neq \emptyset$, where $$\Pi_\theta(X,Y)=\lbrace (x,y^*)\in S_X \times S_{Y^*}: \theta(x,y^*)=1\rbrace.$$

Let $W$ be any Banach space, by $(C_{b,e}(\Pi_\theta (X,Y), W), \|\cdot\|_{\infty})$ we denote the Banach space of all bounded continuous functions $\Phi$ from $\Pi_\theta (X,Y)$ into $W$ such that $\Phi(x,y^*)=\Phi(-x,-y^*)$ for all $(x,y^*)\in \Pi_\theta (X,Y)$. Define the generalized vector-valued {\it $\theta$-numerical radius} with respect to $\theta$ of an operator $\Phi \in C_{b,e}(\Pi_\theta (X,Y), W)$ in a natural way as follows:
$$v_\theta(\Phi):=\sup \lbrace \|\Phi(x,y^*)\|: (x,y^*)\in \Pi_\theta (X,Y)\rbrace:=\|\Phi\|_{\infty}.$$
We say that $\Phi$ attains strongly  its {\it "$\theta$-numerical radius"} if there exists some $(x_0,y^*_0)\in \Pi_\theta(X,Y)$ such that:

$(i)$ $ \|\Phi(x_0,y^*_0)\|=v_\theta(\Phi)$ and  

$(ii)$ $\min(\|x_n-x_0\|+\|y^*_n-y^*_0\|, \|x_n+x_0\|+\|y^*_n+y^*_0\|)\to 0$,  whenever $(x_n, y^*_n)\in \Pi_\theta(X,Y)$ and $\|\Phi(x_n,y^*_n)\|\to v_\theta (\Phi)=\|\Phi(x_0,y^*_0)\|$. 
\vskip5mm
We recover the classical numerical radius as a particular case by taking $Y=X$ and $\theta(x,x^*)=x^*(x)$ for every  $(x,x^*) \in S_X \times S_{X^*}$, in this case we have $\Pi_\theta(X,Y)=\Pi(X)$. By $[\mathcal{F}(X,Y)]\oplus W$, we denote the following space $$[\mathcal{F}(X,Y)]\oplus W:=\lbrace [T]\oplus w:  T\in \mathcal{F}(X,Y), w\in W\rbrace,$$
where $\mathcal{F}(X,Y)$ denotes the space of all finite rank bounded linear operators from $X$ into $Y$ and for each $T \in \mathcal{F}(X,Y)$ and each $ w\in W$, the map $[T]\oplus w$ is defined from $X\times Y^*$ into $W$ by $ (x,y^*)\mapsto y^*(T(x))w$.

We obtain immediately the following corollary by combining  Proposition \ref{corex} and the main result (Theorem \ref{thmprin}).

\begin{corollary} \label{Cor1} Let $X$  be a uniformly convex Banach space, $Y$ be a uniformly smooth Banach space and $\theta : S_X \times S_{Y^*} \to \R$ be a non-zero function  satisfying the properties $(a), (b)$ and $(c)$ (see above). Let $W$ be any Banach space and  $(\mathcal{Z}, \|\cdot\|_\mathcal{Z})$ be a Banach space included in $(C_{b,e}(\Pi_\theta(X,Y), W), \|\cdot\|_{\infty})$ and satisfying:

$(i)$ $\|\cdot\|_\mathcal{Z}\geq \|\cdot\|_{\infty}$.

$(ii)$ $\mathcal{Z}$ contains the space $[\mathcal{F}(X,Y)]\oplus W$ and is such that $\|[T]\oplus w\|_\mathcal{Z}\leq \|T\|\|w\|$ for all $(T,w)\in \mathcal{F}(X,Y)\times W$.

Then, for every $\Theta \in C_{b,e}(\Pi_\theta (X,Y), W)$ the set 
$$\mathcal{G}(\Theta):=\lbrace \Phi \in \mathcal{Z}: \Theta+\Phi \textnormal{  attains strongly its } \theta\textnormal{-numerical radius} \rbrace,$$
is the complement of a $\sigma$-porous subset of $(\mathcal{Z}, \|\cdot\|_{\mathcal{Z}})$. Moreover, we have the following version of the Bishop–Phelps–Bollobás theorem: given $\varepsilon >0$, there exists $\eta(\varepsilon)>0$ such that whenever $\Phi\in \mathcal{Z}$ with $v_\theta (\Phi)=1$ and $(x,y^*)\in \Pi_\theta (X,Y)$, satisfy 
$$\|\Phi(x,y^*)\|>1-\eta(\varepsilon),$$ 
there are $\Psi \in \mathcal{Z}$ with $v_\theta (\Psi)=1$ and $(z,t^*)\in \Pi_\theta (X,Y)$ such that 
$$\|\Psi(z,t^*)\|=1, \hspace{1mm} \|z-x\|+\|t^*-y^*\| < \varepsilon \textnormal{ and } v_\theta (\Psi-\Phi)<\varepsilon.$$
\end{corollary}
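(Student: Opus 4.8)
The plan is to deduce Corollary~\ref{Cor1} directly from the abstract machine of Theorem~\ref{thmprin}, checking with the help of Proposition~\ref{corex} that the pair $(\mathcal{Z},\Pi_\theta(X,Y))$ satisfies all the hypotheses required there. The correct metric to put on $\Pi_\theta(X,Y)$ is the symmetrized one,
$$ d\big((x,y^*),(z,t^*)\big)=\min\big(\|x-z\|+\|y^*-t^*\|,\ \|x+z\|+\|y^*+t^*\|\big), $$
which is exactly the quantity governing condition $(ii)$ in the definition of strong attainment. The evenness built into $C_{b,e}(\Pi_\theta(X,Y),W)$, together with property $(b)$ of $\theta$, makes every element of $\mathcal{Z}$ well defined on the quotient of $\Pi_\theta(X,Y)$ by the identification $(x,y^*)\sim(-x,-y^*)$, so that $d$ is genuinely a metric there. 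The structural hypotheses $(i)$ and $(ii)$ on $\mathcal{Z}$ are precisely what the abstract theorem needs: $(i)$ guarantees that $\|\cdot\|_{\mathcal{Z}}$-small perturbations are $\|\cdot\|_\infty$-small, so $v_\theta=\|\cdot\|_\infty$ is $\|\cdot\|_{\mathcal{Z}}$-continuous, while $(ii)$ guarantees that $\mathcal{Z}$ contains a sufficiently rich and \emph{inexpensive} family of perturbations.

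The substance of the argument, and the role of Proposition~\ref{corex}, is to extract uniform peaking perturbations from the geometry of $X$ and $Y$. Fix $(x_0,y_0^*)\in\Pi_\theta(X,Y)$, pick $x_0^*\in S_{X^*}$ with $x_0^*(x_0)=1$ by Hahn--Banach, and (using the reflexivity of $Y$) $y_0\in S_Y$ with $y_0^*(y_0)=1$; for a unit vector $w\in W$ consider the rank-one map $[T]\oplus w$ with $T(x)=x_0^*(x)\,y_0$, that is $(x,y^*)\mapsto x_0^*(x)\,y^*(y_0)\,w$. Then $\|T\|=1$, so $\|[T]\oplus w\|_{\mathcal{Z}}\le 1$ by $(ii)$, while $v_\theta([T]\oplus w)=1$ is attained at $(x_0,y_0^*)$. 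The decisive point is the \emph{strong} peaking: if $(x_n,y_n^*)\in\Pi_\theta(X,Y)$ satisfy $|x_0^*(x_n)|\,|y_n^*(y_0)|\to 1$, then necessarily $|x_0^*(x_n)|\to 1$ and $|y_n^*(y_0)|\to 1$, whence the uniform convexity of $X$ and of $Y^*$ (the latter equivalent to the uniform smoothness of $Y$) force $x_n$ to approach $x_0$ or $-x_0$ and $y_n^*$ to approach $y_0^*$ or $-y_0^*$. Here property $(b)$ together with the continuity of $\theta$ in property $(a)$ excludes the two ``mixed'' limits $(\pm x_0,\mp y_0^*)$, which fall outside $\Pi_\theta(X,Y)$, and couples the two sign choices into the single identification $(x,y^*)\sim(-x,-y^*)$; thus $d\big((x_n,y_n^*),(x_0,y_0^*)\big)\to 0$. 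Crucially, the moduli of convexity of $X$ and $Y^*$ and the modulus of uniform continuity of $\theta$ are all uniform, so this peaking comes with a rate $\varepsilon\mapsto\eta(\varepsilon)$ independent of the base point $(x_0,y_0^*)$; recording this base-point-uniform rate is exactly the content of Proposition~\ref{corex}, and it is what Theorem~\ref{thmprin} consumes.

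Granting these verifications, Theorem~\ref{thmprin} applies and yields, for every fixed $\Theta\in C_{b,e}(\Pi_\theta(X,Y),W)$, that $\mathcal{G}(\Theta)$ is the complement of a $\sigma$-porous subset of $(\mathcal{Z},\|\cdot\|_{\mathcal{Z}})$. The Bishop--Phelps--Bollob\'as conclusion comes from the same peaking family: given $\Phi$ with $v_\theta(\Phi)=1$ and $(x,y^*)$ with $\|\Phi(x,y^*)\|>1-\eta(\varepsilon)$, one adds a small multiple of the rank-one peak perturbation based at $(x,y^*)$ to produce $\Psi$ attaining its $\theta$-numerical radius at some $(z,t^*)$ with $\|z-x\|+\|t^*-y^*\|<\varepsilon$ and $v_\theta(\Psi-\Phi)<\varepsilon$, the threshold $\eta(\varepsilon)$ being inherited from the uniform peaking rate of the previous paragraph.

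I expect the entire difficulty to be concentrated in the peaking estimate, in two related points. First, one must check that the convergence of near-maximizers is governed by a single modulus valid simultaneously for \emph{all} base points; this is precisely where strict convexity and smoothness would be insufficient and the \emph{uniform} moduli of $X$ and $Y^*$, as well as the uniform continuity of $\theta$, are indispensable. Second, one must treat the symmetry with care, since a sequence of near-maximizers may cluster near $(x_0,y_0^*)$ or near $(-x_0,-y_0^*)$, and it is exactly to absorb this dichotomy that $d$, the evenness defining $C_{b,e}(\Pi_\theta(X,Y),W)$, and hypothesis $(b)$ on $\theta$ are all arranged around the identification $(x,y^*)\sim(-x,-y^*)$. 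Once this uniform peaking modulus is in place, nothing further is required of the geometry, and the passage to $\sigma$-porosity and to the Bishop--Phelps--Bollob\'as property is delivered wholesale by Theorem~\ref{thmprin}.
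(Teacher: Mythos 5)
Your proposal is correct and follows exactly the paper's own route: Corollary \ref{Cor1} is deduced by applying Theorem \ref{thmprin} to $L=\Pi_\theta(X,Y)$, whose property $(P)$ is supplied by Proposition \ref{corex} (uniform convexity of $X$ and of $Y^*$ plus the antisymmetry and uniform continuity of $\theta$ to exclude the mixed-sign limits), after observing that attaining $d_H$-strongly the maximum on $\Pi_\theta(X,Y)$ is by definition the same as strongly attaining the $\theta$-numerical radius. Your ``uniform peaking'' paragraph reproduces, in sequential form, content the paper distributes between Lemma \ref{Prop3} and Lemma \ref{lemC} inside the proof of Theorem \ref{thmprin}, so no genuinely different ingredient is involved.
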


The class of Banach spaces $\mathcal{Z}$ satisfying the above corollary includes the whole space $(C_{b,e}(\Pi_\theta (X,Y), W), \|\cdot\|_{\infty})$, the space $\mathcal{B}(X\times Y^*, W)$ of bounded bilinear mappings $\Phi$ from $X\times Y^*$ into $W$ equipped with its natural norm $\|\Phi\|=\sup\lbrace \|\Phi(x,y^*)\|: x\in S_X, y^*\in S_{Y^*}\rbrace$, as well as (when $W=\R$) any closed subspace of $\mathcal{L}(X,Y)$ containing $\mathcal{F}(X,Y)$, where $\mathcal{L}(X,Y)$ (resp. $\mathcal{F}(X,Y)$) denotes the space of all bounded linear operators (resp. of all  finite rank linear operators) from $X$ into $Y$. As we have already said in the abstract, our results bring extensions and generalizations of results already known in the literature on several directions. For example: 

$(i)$ the notion of numerical radius is extended to the more general notion of {\it "$\theta$-numerical radius"},

$(ii)$ the density is replaced by being the complement of a  $\sigma$-porous subset, 

$(iii)$ the operators attaining their {\it numerical radius} are replaced by operators strongly (up to a symmetry) attaining their {\it $\theta$-numerical radius} and 

$(iv)$ the results are obtained in the vector-valued framework for general linear and non-linear vector-valued operators.

\vskip5mm
Our results  will be established by taking advantage from a  new variational principle (extending the  Deville-Godfroy-Zizler one's) established recently by the author in \cite[Theorem 4.6]{Ba}.

%%%%%%%%%%%%%%%%%%%%%%%%
\section{Notion and Definitions} 
We first recall some notions and definitions that we are going to use in this paper.  
\begin{definition} Let $C$ be a nonempty set and $\Gamma : C\times C\to \R^+$. We say that $\Gamma$ is a pseudometric if 
	
	$(1)$ $\gamma(x,x)= 0$, for all $x \in C$.
	
	$(2)$ $\gamma(x,y)=\gamma(y,x)$, for all $x \in C$.
	
	$(3)$ $\gamma(x,y)\leq \gamma(x,z)+ \gamma(z,y)$, for all $x,y,z \in C$.
\end{definition}
Unlike a metric space,  one may have $\gamma ( x , y ) = 0$  for distinct values $ x \neq y $. A pseudometric induces an equivalence relation, that converts the pseudometric space into a metric space. This is done by defining $x\sim y$ if  $\gamma ( x , y ) = 0$.
Let $\Gamma_\gamma:  C\to  C/\sim $ the canonical surjection mapping and let 
$$d_\gamma(\Gamma_\gamma(x),\Gamma_\gamma(y)):=\gamma(x,y).$$
Then, $(C/\sim,d_\gamma)$ is a well defined metric space. We say that $(C,\Gamma)$ is a complete pseudometric space, if $(C/\sim,d_\gamma)$ is a complete metric space.

We  need the following definition of uniform separation property for pseudometrics introduced in \cite{Ba}.
\begin{definition} \label{USP}
	Let $X$ be a Banach space, $C$ be a subset of the dual $X^*$ and $(C,\gamma)$ be a pseudometric space.  We say that  $(C,\gamma)$ has the weak$^*$-uniform separation property (in short $w^*USP$) in $X^*$ if  for every $\varepsilon>0$ small enough, there exists $\varpi_C(\varepsilon)>0$ such that for every $p\in C$, there exists $x_{p,\varepsilon}\in B_X$ satisfying: $\forall q\in C,$
	\begin{eqnarray*}
		\gamma(p,q)\geq \varepsilon \Longrightarrow \langle p , x_{p,\varepsilon} \rangle -  \varpi_C(\varepsilon) \geq  \langle q , x_{p,\varepsilon} \rangle.
	\end{eqnarray*} 
The function $\varpi_C$ is called, the {\it modulus of uniform separation} of $(C,\gamma)$. 
	%%%%%%%%%%%%%%%%%%%%%%%%%%%%%
	%%%%%%%%%%%%%%%%%%%%%%%%%%%%%%
\end{definition}
\begin{Exemp} Let $X$ be a Banach space  whose dual $X^*$ is uniformly convex. Then, $(S_{X^*},\|\cdot\|)$ has the $w^*USP$ in $X^*$ (see \cite{Ba}).
\end{Exemp}
We recall the notion of $\sigma$-porosity. In the following definition,  $\mathring{B}_X(x ; r)$ stands for the open ball in $X$ centered at $x$ and with radius $r > 0$.
\begin{definition}\label{prous}  Let $(X ; d)$ be a metric space and $A$ be a subset of $X$. The set $A$ is
said to be porous in $X$ if there exist $\lambda_0 \in (0; 1]$ and $r_0 > 0$ such that for any $x \in X$
and $r \in (0; r_0]$ there exists $y \in X$ such that $\mathring{B}_X(y; \lambda_0r) \subseteq \mathring{B}_X(x; r) \cap (X \setminus A)$. The set
$A$ is called $\sigma$-porous in $X$ if it can be represented as a countable union of porous sets in $X$.
\end{definition}
 Every $\sigma$-porous set is of first Baire category. Moreover, in $\R^n$, every $\sigma$-porous set is of Lebesque measure zero. However,  there does exist a non-$\sigma$-porous subset of $\R^n$ which is of the first category and of Lebesgue measure zero. For more informations about $\sigma$-porosity, we refer to \cite{Za}. 

%%%%%%%%%%%%%%%%%%%%%%
%%%%%%%%%%%%%%%%%%%%%%%%%

\section{The main result and applications} \label{SP}
The first step of our work, consists in building an adequate formalism, using the Hausdorff distance and quotient spaces, in order to be able to use judiciously a result recently obtained by the author in  \cite[Theorem 4.6]{Ba}. We will also need to establish some lemmas and propositions.

Recall that the Hausdorff distance $d_H(A,B)$ between two nonempty  closed and bounded subsets $A, B$ of some metric space $(E,d)$ is defined as follows.
\begin{eqnarray*}
 d_H ( A , B ) &:=& \max (\sup_{ y\in B}  d(y , A ) , \sup_{x \in A} d ( x , B ) ) \\
&=& \max \lbrace \sup_{y \in B} \inf_{x \in A} d ( x , y ) , \sup_{x\in A} \inf_{y \in B} d ( x , y ) \rbrace.
\end{eqnarray*}
It is well-know that if $(E,d)$ is complete and if we denote $K(E)$ the set of all nonempty compact of $E$, then $(K(E), d_H)$ is a complete metric space. We are interested in this paper on compact subsets consisting only on two points. 

\vskip5mm
Let $V$ be a Banach space. We define an equivalence relation $\sim$ on $V$ as follows: $x \sim y$ if and only if $x=y$ or $x=-y$. Let $\Gamma_V: V \to  V/\sim $ the canonical surjection mapping, that is, $\Gamma_V(x)=\lbrace x, -x \rbrace$ for all $x \in V$. We equip $\Gamma_V(V)=V/\sim$ with the Hausdorff distance $d_H$. We see easily that: $\forall x, y \in V$,
$$d_H(\Gamma_V(x),\Gamma_V(y))=\min(\|x-y\|,\|x+y\|).$$
Let $W$ be a Banach space and $L$ be a symmetric closed subset of $V$. By $(C_{b,e}(L, W), \|\cdot\|_{\infty})$ we denote the space of all $W$-valued bounded continuous even function $\Phi : (L, \|\cdot\|) \to W$, that is, $\Phi$ is a bounded continuous function satisfying  $\Phi(x)=\Phi(-x)$  for all $x\in L$, equipped with the sup-norm. By $(C_b(\Gamma_V(L), W), \|\cdot\|_{\infty})$ we denote the space of all $W$-valued bounded continuous functions on $(\Gamma_V(L), d_H)$ equipped with the sup-norm.
We give the following elementary proposition.
\begin{proposition} \label{P1} Let $V , W$ be Banach spaces  and $L$ be a symmetric closed subset of $V$. Then, the following assertions hold.

$(i)$ $(\Gamma_V(L), d_H)$ is a complete metric space.  

$(ii)$ For every $\Phi \in C_{b,e}(L, W)$, there exists a unique continuous function $\tilde{\Phi}: (\Gamma_V(L), d_H) \to W$ such that $\Phi=\tilde{\Phi}\circ \Gamma_V$. Hence, the map $I_L: (C_{b,e}(L, W), \|\cdot\|_{\infty})\to (\widetilde{C}_{b,e}(\Gamma_V(L), W), \|\cdot\|_{\infty})$ defined by $I_L(\Phi)= \tilde{\Phi}$ is a linear surjective isometry, where $\widetilde{C}_{b,e}(\Gamma_V(L), W):=I_L(C_{b,e}(L,W))$. On the other hand, $\widetilde{C}_{b,e}(\Gamma_V(L), W)$ is a closed subspace of $(C_b(\Gamma_V(L), W), \|\cdot\|_{\infty})$.

\end{proposition}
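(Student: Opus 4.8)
The plan is to treat $(\Gamma_V(L),d_H)$ as a subspace of the complete metric space $(K(V),d_H)$ and to transport everything about $\Phi$ through the canonical surjection $\Gamma_V$ using the evenness hypothesis. Each class $\Gamma_V(x)=\{x,-x\}$ is a nonempty finite, hence compact, subset of $V$ contained in $L$ (since $L=-L$), so $\Gamma_V(L)\subseteq K(V)$ and $d_H$ restricts to a genuine metric there; the explicit formula $d_H(\Gamma_V(x),\Gamma_V(y))=\min(\|x-y\|,\|x+y\|)$ recalled just before the statement is the only thing I use about $d_H$.

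For $(i)$ I would prove completeness directly. Given a Cauchy sequence $(\Gamma_V(x_n))_n$, I extract a subsequence $(x_{n_k})_k$ with $\min(\|x_{n_k}-x_{n_{k+1}}\|,\|x_{n_k}+x_{n_{k+1}}\|)<2^{-k}$ and then choose signs $\delta_k\in\{-1,1\}$ inductively so that $w_k:=\delta_k x_{n_k}$ satisfies $\|w_k-w_{k+1}\|<2^{-k}$: at each step one of the two representatives of $\Gamma_V(x_{n_{k+1}})$ lies within $2^{-k}$ of $w_k$, and I pick that sign. Then $(w_k)$ is Cauchy in $V$, converges to some $w$, and $w\in L$ because each $w_k\in L$ and $L$ is closed. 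Since $d_H(\Gamma_V(w_k),\Gamma_V(w))\le\|w_k-w\|\to0$, the subsequence converges to $\Gamma_V(w)\in\Gamma_V(L)$, and a Cauchy sequence possessing a convergent subsequence converges. This sign-selection step, which turns a Cauchy sequence of classes into a Cauchy sequence of genuine points of $V$, is the one place requiring care and is the main obstacle.

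For $(ii)$ I set $\tilde\Phi(\Gamma_V(x)):=\Phi(x)$. This is well defined precisely because $\Phi$ is even, so the value is independent of the chosen representative $\pm x$, and by construction $\tilde\Phi\circ\Gamma_V=\Phi$; uniqueness is immediate since $\Gamma_V$ is surjective onto $\Gamma_V(L)$. For continuity I reuse the sign trick: if $\Gamma_V(x_n)\to\Gamma_V(x)$ then $\min(\|x_n-x\|,\|x_n+x\|)\to0$, so there are signs $\varepsilon_n$ with $\varepsilon_n x_n\to x$, and continuity together with evenness of $\Phi$ give $\Phi(x_n)=\Phi(\varepsilon_n x_n)\to\Phi(x)$, that is $\tilde\Phi(\Gamma_V(x_n))\to\tilde\Phi(\Gamma_V(x))$. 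Boundedness of $\tilde\Phi$ and the identity $\|\tilde\Phi\|_\infty=\sup_{x\in L}\|\Phi(x)\|=\|\Phi\|_\infty$ are then clear, so $I_L$ is a well-defined isometry; linearity follows from the pointwise formula, and surjectivity onto $\widetilde{C}_{b,e}(\Gamma_V(L),W)$ holds by definition of that image.

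Finally, to see that $\widetilde{C}_{b,e}(\Gamma_V(L),W)$ is closed in $(C_b(\Gamma_V(L),W),\|\cdot\|_\infty)$, I would argue by completeness transfer. The space $C_{b,e}(L,W)$ is a closed subspace of the Banach space $C_b(L,W)$, because the relation $\Phi(x)=\Phi(-x)$ is preserved under uniform limits, hence it is complete; since $I_L$ is a surjective isometry onto its image, that image is complete, and a complete subspace of the metric space $C_b(\Gamma_V(L),W)$ is closed. I do not expect any difficulty beyond routine bookkeeping here.
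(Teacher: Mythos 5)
Your proof is correct, but it takes a genuinely more hands-on route than the paper's. For part $(i)$ the paper argues abstractly: $\Gamma_V(L)$ sits inside the hyperspace $K(L)$ of compact subsets of the complete space $L$, which is complete for $d_H$, and $\Gamma_V(L)$ is closed there; your sign-selection argument instead proves completeness directly from the formula $d_H(\Gamma_V(x),\Gamma_V(y))=\min(\|x-y\|,\|x+y\|)$, which is exactly the ``direct'' alternative the paper mentions but does not carry out, and it has the merit of not relying on the (asserted, unproved) closedness of $\Gamma_V(L)$ in the hyperspace. For part $(ii)$ the paper invokes the factorization lemma to obtain existence, uniqueness and continuity of $\tilde\Phi$ in one stroke --- an argument that tacitly uses that $\Gamma_V$ is a quotient map onto $(\Gamma_V(L),d_H)$, i.e.\ that the $d_H$-topology agrees with the quotient topology --- whereas you define $\tilde\Phi$ explicitly and prove sequential continuity by the same sign trick used in $(i)$; this is more elementary and supplies the detail the factorization-lemma route leaves implicit. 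Finally, where the paper dismisses the closedness of $\widetilde{C}_{b,e}(\Gamma_V(L),W)$ in $C_b(\Gamma_V(L),W)$ as trivial, your completeness-transfer argument (evenness survives uniform limits, so $C_{b,e}(L,W)$ is complete; an isometric image of a complete space is complete, hence closed) is a clean and complete justification. In short: the paper's proof is shorter because it cites standard machinery; yours is self-contained and actually verifies the steps the paper leaves to the reader.
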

\begin{proof} The part $(i)$ can be easily proven directely or  by observing that on one hand $\Gamma_V(L)\subseteq K(L)$ ($K(L)$ is the set of all compact subsets of $L$ which is complete for Hausdorff distance $d_H$) and on the other hand, $\Gamma_V(L)$ is closed for the Hausdorff distance. To see part $(ii)$, it suffices to use the factorization lemma. Indeed, let $\Phi \in C_{b,e}(L, W)$. If $\Gamma_V(x)=\Gamma_V(y)$, then $x=y$ or $x=-y$. Since $\Phi$ is an even function we have $\Phi(x)=\Phi(-x)=\Phi(y)$. Thus, by the factorization lemma there exists a unique continuous mapping  $\tilde{\Phi}: (\Gamma_V(L), d_H) \to W$ such that $\Phi=\tilde{\Phi}\circ \Gamma_V$. Now, it is clear that the map $I_L: \Phi \to \tilde{\Phi}$ is a linear surjective isometry, for the sup-norm,  into its image. On the other hand, it is trivial that $\widetilde{C}_{b,e}(\Gamma_V(L), W)$ is a closed subspace of $C_b(\Gamma_V(L), W)$.
\end{proof}
\begin{definition} Let $V,W$ be Banach spaces and $L$ be a symmetric closed subset of $V$. Let $\Phi \in C_{b,e}(L, W)$. We say that $\Phi$ attains $d_H$-strongly its maximum on $L$ if there exists $x_0\in L$ such that $$\sup_{x\in L}\|\Phi(x)\|=\|\Phi(x_0)\|$$ and $d_H(\Gamma_V(x_n), \Gamma_V(x_0)))\to 0$, whenever $\|\Phi(x_n)\|\to \sup_{x\in L}\|\Phi(x)\|$. In other words, $\Phi$ attains $d_H$-strongly its maximum on $L$ at $x_0\in L$ if and only if the function $\|\widetilde{\Phi}
(\cdot)\|:=\|I_L(\Phi)(\cdot)\|$ attains its strong maximum (in the classical way) on $(\Gamma_V(L), d_H)$ at $\Gamma_V(x_0)$.
\end{definition}

Let $V$ be a Banach space and $L$ be a nonempty symmetric and closed subset of $V$. Let $(\mathcal{Z}, \|\cdot\|_\mathcal{Z})$ be a Banach space included in $C_{b,e}(L, W)$ and such that  $\|\cdot\|_\mathcal{Z}\geq \|\cdot\|_{\infty}$ and denote $\widetilde{\mathcal{Z}}=I_L(\mathcal{Z})$, which is included in $\widetilde{C}_{b,e}(\Gamma_V(L), W)\subseteq C_b(\Gamma_V(L),W)$. We equipp $\widetilde{\mathcal{Z}}$ with the norm defined by $\|\widetilde{\Phi}\|_{\widetilde{\mathcal{Z}}} := \|I_L(\Phi)\|_{\widetilde{\mathcal{Z}}}:=\|\Phi\|_\mathcal{Z}$ for all $\Phi \in \mathcal{Z}$, so that $\mathcal{Z}$ and $\widetilde{\mathcal{Z}}$ are isometrically isomorphic by the isomorphism $I_L$. Define the subset $\mathcal{C}_L$ of the dual $\widetilde{\mathcal{Z}}^*$ (thanks to the fact that $\|\cdot\|_{\mathcal{Z}}\geq \|\cdot\|_{\infty}$) as follows:

\begin{eqnarray} \label{F1}
\mathcal{C}_L:=\lbrace w^*\circ \delta_{\Gamma_V (x)}: x \in L, w^*\in S_{W^*}\rbrace,
\end{eqnarray}
where, for each fixed $x\in L$, $w^* \in W^*$, the linear and continuous functional $ w^*\circ \delta_{\Gamma_V(x)}: \widetilde{\mathcal{Z}} \to \R$ is defined by $$w^*\circ \delta_{\Gamma_V (x)}(\tilde{\Phi}):=w^*(\tilde{\Phi}\circ \Gamma_V (x)):=w^*(\Phi(x)), \textnormal{ for all } \widetilde{\Phi}\in \widetilde{\mathcal{Z}}.$$ We define $\gamma_\mathcal{P}: \mathcal{C}_L \times \mathcal{C}_L \to \R^+$   as follows: for all $w^*, v^*\in S_{W^*}$ and for all $x; y \in L$,
\begin{eqnarray} \label{F2}
\gamma_\mathcal{P}(w^*\circ \delta_{\Gamma_V (x)}, v^*\circ \delta_{\Gamma_V (y)}):=d_H(\Gamma_V (x), \Gamma_V (y)).
\end{eqnarray}
Assume for the moment that $(\mathcal{C}_L, \gamma_\mathcal{P})$ is a well defined pseudometric subset of $\widetilde{\mathcal{Z}}^*$ (see for instance  Lemma \ref{lemC0}).
\vskip5mm
 We give the following version of \cite[Theorem 4.6]{Ba} adapted to the context of this article and which will be of great use later.

\begin{proposition} \label{PP1} $($\textnormal{see} \cite[Theorem 4.6]{Ba}$)$ Let $V, W$ be Banach spaces and $L$ be a symmetric closed subset of $V$. Let $(\mathcal{Z}, \|\cdot\|_\mathcal{Z})$ be a Banach space included in $(C_{b,e}(L, W), \|\cdot\|_{\infty})$ and such that $\|\cdot\|_\mathcal{Z}\geq \|\cdot\|_{\infty}$. Assume that $(\mathcal{C}_L, \gamma_\mathcal{P})$ is a well defined complete pseudometric space having the $w^*USP$ in the dual $\widetilde{\mathcal{Z}}^*$. Then, for every $\Theta \in C_{b,e}(L, W)$, the set 
$$ \mathcal{G}(\Theta):=\lbrace \Phi \in \mathcal{Z}: \Theta + \Phi \textnormal{  attains } d_H\textnormal{-strongly its maximum on } L\rbrace,$$
is the complement of a $\sigma$-porous subset of $(\mathcal{Z}, \|\cdot\|_{\mathcal{Z}})$. Moreover, we have the following version of the Bishop–Phelps–Bollobás theorem: given $\varepsilon >0$, there exists $\eta(\varepsilon)>0$ such that whenever $\Phi\in \mathcal{Z}$ with $\|\Phi\|_{\infty}=1$ and $x\in L$, satisfy 
$$\|\Phi(x)\|>1-\eta(\varepsilon),$$ 
there are $\Psi \in \mathcal{Z}$ with $\|\Psi\|_{\infty}=1$ and $y\in L$ such that 
$$\|\Psi(y)\|=1, \hspace{1mm} \|x-y\| < \varepsilon \textnormal{ and } \|\Psi-\Phi\|_{\infty}<\varepsilon.$$
\end{proposition}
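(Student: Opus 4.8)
The plan is to push everything through the quotient picture set up in Proposition~\ref{P1} and then invoke \cite[Theorem 4.6]{Ba} as a black box, the whole substance of the statement being already contained in that theorem. First I would record the identifications that make the two frameworks coincide. By Proposition~\ref{P1}, the map $I_L$ is a surjective linear isometry from $(C_{b,e}(L,W),\|\cdot\|_{\infty})$ onto $(\widetilde{C}_{b,e}(\Gamma_V(L),W),\|\cdot\|_{\infty})$, and by construction it carries $(\mathcal{Z},\|\cdot\|_\mathcal{Z})$ isometrically onto $(\widetilde{\mathcal{Z}},\|\cdot\|_{\widetilde{\mathcal{Z}}})$, with $\widetilde{\Theta}:=I_L(\Theta)$. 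Since a surjective isometry maps porous sets to porous sets (the constants $\lambda_0,r_0$ of Definition~\ref{prous} transfer verbatim) and commutes with taking complements, it suffices to prove both conclusions for $\widetilde{\mathcal{Z}}$ and the set $I_L(\mathcal{G}(\Theta))$, and then to pull them back through $I_L$.

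Next I would verify that $\mathcal{C}_L$ plays for $\widetilde{\mathcal{Z}}$ exactly the role required by the variational principle. Using the definition \eqref{F1} of the evaluation functionals and the Hahn--Banach theorem in $W$ (which gives $\|w\|=\sup_{w^*\in S_{W^*}}w^*(w)$ for every $w\in W$), one has for each $\widetilde{\Phi}\in\widetilde{\mathcal{Z}}$
$$\sup_{p\in\mathcal{C}_L}p(\widetilde{\Phi})=\sup_{x\in L}\ \sup_{w^*\in S_{W^*}}w^*(\Phi(x))=\sup_{x\in L}\|\Phi(x)\|=\|\Phi\|_{\infty}=\|\widetilde{\Phi}\|_{\infty},$$
so $\mathcal{C}_L$ is a norming set for the sup-norm on $\widetilde{\mathcal{Z}}$ (and each $p\in\mathcal{C}_L$ lies in $B_{\widetilde{\mathcal{Z}}^*}$ thanks to $\|\cdot\|_\mathcal{Z}\ge\|\cdot\|_{\infty}$). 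Moreover, by the very definition of $d_H$-strong attainment and by the definition \eqref{F2} of $\gamma_\mathcal{P}$, the assertion that $\Theta+\Phi$ attains $d_H$-strongly its maximum on $L$ coincides with the assertion that $\widetilde{\Theta}+\widetilde{\Phi}$ attains a strong maximum over $\mathcal{C}_L$ in the sense of \cite{Ba}: if $p_n=w_n^*\circ\delta_{\Gamma_V(x_n)}$ is a maximizing sequence for $\widetilde{\Theta}+\widetilde{\Phi}$, then $\|(\Theta+\Phi)(x_n)\|\to\|\Theta+\Phi\|_{\infty}$ and $\gamma_\mathcal{P}(p_n,p_0)=d_H(\Gamma_V(x_n),\Gamma_V(x_0))$, so $\gamma_\mathcal{P}$-convergence of the maximizers is precisely $d_H$-convergence of the corresponding points of $\Gamma_V(L)$.

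With these identifications in hand, the hypotheses of \cite[Theorem 4.6]{Ba} are exactly the ones assumed here, namely that $(\mathcal{C}_L,\gamma_\mathcal{P})$ is a well defined complete pseudometric space having the $w^*USP$ in $\widetilde{\mathcal{Z}}^*$. Applying that theorem to the Banach space $\widetilde{\mathcal{Z}}$, to the norming set $\mathcal{C}_L$ and to the datum $\widetilde{\Theta}$ yields at once that the set of $\widetilde{\Phi}\in\widetilde{\mathcal{Z}}$ for which $\widetilde{\Theta}+\widetilde{\Phi}$ attains a strong maximum over $\mathcal{C}_L$ is the complement of a $\sigma$-porous subset of $\widetilde{\mathcal{Z}}$, together with the corresponding Bishop--Phelps--Bollob\'as statement phrased in the sup-norm. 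Transporting these back by the isometry $I_L$ gives the $\sigma$-porosity of the complement of $\mathcal{G}(\Theta)$ and the announced BPB property, since $I_L$ preserves the $\|\cdot\|_{\infty}$ data involved.

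The only point requiring genuine care is a cosmetic adjustment in the BPB conclusion: the transferred statement delivers $d_H(\Gamma_V(x),\Gamma_V(y))=\min(\|x-y\|,\|x+y\|)<\varepsilon$, whereas the claim asks for $\|x-y\|<\varepsilon$. This is settled by exploiting the symmetry of $L$ together with the evenness of the functions: since $-y\in L$ and $\|\Psi(-y)\|=\|\Psi(y)\|$, one may replace $y$ by whichever of $\pm y$ realises the minimum, which forces $\|x-y\|<\varepsilon$ while preserving $\|\Psi(y)\|=1$. I therefore expect no real obstacle beyond this bookkeeping; the sole mathematical work in this proposition is the faithful verification that the abstract hypotheses of \cite[Theorem 4.6]{Ba} are met by the concrete pair $(\widetilde{\mathcal{Z}},\mathcal{C}_L)$ and that the two notions of strong maximum coincide, everything else being a direct application.
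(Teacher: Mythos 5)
Your proposal is correct and follows essentially the same route as the paper's proof: transfer everything to $\widetilde{\mathcal{Z}}$ via the isometry $I_L$ of Proposition~\ref{P1}, invoke \cite[Theorem 4.6]{Ba} as a black box (the hypotheses on $(\mathcal{C}_L,\gamma_\mathcal{P})$ being exactly those assumed in the statement), identify $d_H$-strong attainment on $L$ with strong attainment of the sup-norm on $\Gamma_V(L)$, and repair the Bishop--Phelps--Bollob\'as conclusion by replacing $y$ with $\pm y$ using the symmetry of $L$ and the evenness of $\Psi$. The only difference is that you spell out some bookkeeping the paper leaves implicit (porosity is preserved by surjective isometries, $\mathcal{C}_L$ norms the sup-norm), which is harmless.
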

\begin{proof} We apply directely \cite[Theorem 4.6]{Ba} to the Banach space $(\widetilde{\mathcal{Z}}, \|\cdot\|_{\widetilde{\mathcal{Z}}})$ which is included in $C_b(\Gamma_V(L), W)$ (where, $\Gamma_V(L)$ is equipped with the complete metric $d_H$), using the fact that $\mathcal{Z}$ and $\widetilde{\mathcal{Z}}$ are isometrically isomorphic by the isometry $I_L$ and observing that by the definitions a mapping $\widetilde{\Phi}\in \widetilde{C}_{b,e}(\Gamma_V(L), W)$ strongly attains its sup-norm if and only if $\Phi \in C_{b,e}(L, W)$ attains  $d_H$-strongly its maximum on $L$. Notice also that for every $\Phi \in \mathcal{Z}$, if there exists $y\in L$ such that 
$\|\Phi(y)\|=1$ and  $d_H(\Gamma_V(x), \Gamma_V(y)) <\varepsilon$, then there exists $\bar{y}=\pm y\in L$ such that $\|\Phi(\bar{y})\|=1$ and $\|x-\bar{y}\|< \varepsilon$ (since, $d_H(\Gamma_V(x), \Gamma_V(y))=\min(\|x-y\|, \|x+y\|)$ and $\Phi$ is an even function).
\end{proof}
The interest now is to apply the previous proposition with judiciously chosen sets $L$, when the Banach space $V$  satisfies some reasonable property. The case of  $V=X\times Y^*$ (where $X,Y$ are Banach spaces),  $L\subseteq S_X\times S_{Y^*}$ and $(\mathcal{Z}, \|\cdot\|_\mathcal{Z})$ is a Banach space included in $(C_{b,e}(L, W),\|\cdot\|_{\infty})$, will allow to obtain extensions and new results around the problem of operators attaining  their  numerical radius (resp. the problem of operators attaining  their norm). In particular, Corollary \ref{Cor1} in the introduction will be given with  $L=\Pi_\theta (X,Y)$. Thus, in what follows, we will give a general sufficient condition on the spaces $X, Y, L$ and the Banach space  $\mathcal{Z}$  such that $(\mathcal{C}_L, \gamma_\mathcal{P})$ will be a complete pseudometric space having the $w^*USP$ in the dual $\widetilde{\mathcal{Z}}^*$.  To do this, we need some definitions and lemmas. 

%\subsection{Operators attaining their numerical radius}. 
\subsection{The property $(P)$} 
We need the following definition which is closely related to the notion of $w^*USP$ in Definition \ref{USP}. The space $X\times Y^*$ will be equipped with the norm $\|(x, y^*)\|:=\|x\|+\|y^*\|$.  In what follows, we will simply denote $\Gamma$ to designate $\Gamma_{X\times Y^*}$. Recall that for all $(x,y^*), (z,t^*) \in X\times Y^*$,
$$d_H(\Gamma(x,y^*),\Gamma(z,t^*))=\min(\|x-z\|+\|y^*-t^*\|,\|x+z\|+\|y^*+t^*\|).$$
\begin{definition} \label{(P)} Let $X,Y$ be Banach spaces and $L$ be a nonempty symmetric  and closed subset of $S_X\times S_{Y^*}$. We say that $L$ has property $(P)$ in $X\times Y^*$ if for every $\varepsilon >0$ small enough, there exists a modulus $\varpi (\varepsilon)>0$ such that for every $(x,y^*) \in L$, there exists $(x^*_{\varepsilon},y_{\varepsilon})\in  B_{X^*}\times B_Y$ satisfying: $\forall (z,t^*)\in L$
\begin{eqnarray*} \label{U11}
		d_H(\Gamma(x,y^*), \Gamma(z,t^*))  \geq \varepsilon \Longrightarrow x^*_{\varepsilon}(x) +y^*(y_\varepsilon)-  \varpi (\varepsilon) \geq  |x^*_{\varepsilon} (z)|+|t^*(y_\varepsilon)|.
	\end{eqnarray*}
\end{definition}

Recall that a Banach space $(X,\|.\|)$ is uniformly convex if for each $\varepsilon\in (0,2]$,
\begin{eqnarray*}
\delta(\varepsilon):=\inf\lbrace 1-\|\frac{x+y}{2}\|: x,y\in S_X; \|x-y\|\geq \varepsilon\rbrace >0.
\end{eqnarray*}
We give the following lemma which will allow us to say that our results on the subject of operators attaining their {\it $\theta$-numerical radius} applies in particular to spaces which are uniformly convex and uniformly smooth. More concrete situations will be provided in Proposition \ref{corex}.

\begin{lemma} \label{Prop3} Let $X$ be a uniformly convex Banach space and $Y$ be a uniformly smooth Banach space. Let $L$ be a nonempty closed and symmetric subset of $S_X \times S_{Y^*}$ satisfying: for all $(x,y^*), (z,t^*)\in L$ and for every $\varepsilon >0$ small enough,
$$ d_H(\Gamma(x,y^*), \Gamma(z,t^*))  \geq \varepsilon \Longrightarrow d_H(\Gamma(-x,y^*), \Gamma(z,t^*))  \geq \varepsilon.$$
Then, $L$ has the property $(P)$.
\end{lemma}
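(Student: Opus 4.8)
The plan is to exploit the standard duality between uniform convexity and uniform smoothness together with a short combinatorial use of the extra hypothesis on $L$. First I would record two consequences of the assumptions: since $X$ is uniformly convex it is reflexive, and since $Y$ is uniformly smooth its dual $Y^*$ is uniformly convex and $Y$ is reflexive. Write $\delta_X$ and $\delta_{Y^*}$ for the moduli of convexity of $X$ and of $Y^*$; both are strictly positive on $(0,2]$ and nondecreasing, which is all I will use.

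For a fixed $(x,y^*)\in L$, I would choose the pair $(x^*_\varepsilon, y_\varepsilon)$ independently of $\varepsilon$: let $x^*_\varepsilon \in S_{X^*}$ be a norming functional for $x$ (Hahn--Banach), so that $x^*_\varepsilon(x)=1$, and, using reflexivity of $Y$, let $y_\varepsilon \in S_Y$ satisfy $y^*(y_\varepsilon)=1$, i.e. $y_\varepsilon$ norms $y^*$ once $y_\varepsilon$ is viewed in $Y=Y^{**}$. Both lie in $B_{X^*}\times B_Y$, and with this choice the left-hand side of the implication in property $(P)$ equals $2-\varpi(\varepsilon)$, so the task reduces to bounding the right-hand side $|x^*_\varepsilon(z)|+|t^*(y_\varepsilon)|$ away from $2$.

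The core estimates come from uniform convexity. For $z\in S_X$, applying the definition of $\delta_X$ to $\frac{x\pm z}{2}$ together with $x^*_\varepsilon(\frac{x+z}{2})=\frac12(1+x^*_\varepsilon(z))$ gives $|x^*_\varepsilon(z)|\le 1-2\delta_X(\min(\|x-z\|,\|x+z\|))$; symmetrically, viewing $y_\varepsilon\in Y=Y^{**}$ as a norming functional for $y^*$ on the uniformly convex space $Y^*$, the same computation yields $|t^*(y_\varepsilon)|\le 1-2\delta_{Y^*}(\min(\|y^*-t^*\|,\|y^*+t^*\|))$ for $t^*\in S_{Y^*}$. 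Writing $a=\|x-z\|$, $b=\|x+z\|$, $c=\|y^*-t^*\|$, $d=\|y^*+t^*\|$, the hypothesis $d_H(\Gamma(x,y^*),\Gamma(z,t^*))\ge\varepsilon$ gives $a+c\ge\varepsilon$ and $b+d\ge\varepsilon$, while the extra hypothesis, which says precisely that $d_H(\Gamma(-x,y^*),\Gamma(z,t^*))\ge\varepsilon$, additionally gives $b+c\ge\varepsilon$ and $a+d\ge\varepsilon$.

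The decisive and only nontrivial step is the combinatorial observation that these four inequalities, being all four sums in $\{a,b\}\times\{c,d\}$, force $\min(a,b)\ge\varepsilon/2$ or $\min(c,d)\ge\varepsilon/2$: if instead some $p\in\{a,b\}$ and some $q\in\{c,d\}$ were both $<\varepsilon/2$, then $p+q<\varepsilon$ would contradict the corresponding inequality among the four. Feeding this into the two convexity estimates yields $|x^*_\varepsilon(z)|+|t^*(y_\varepsilon)|\le 2-2\min(\delta_X(\varepsilon/2),\delta_{Y^*}(\varepsilon/2))$, so property $(P)$ holds (for $\varepsilon$ small enough that $\varepsilon/2\le 2$) with modulus $\varpi(\varepsilon):=2\min(\delta_X(\varepsilon/2),\delta_{Y^*}(\varepsilon/2))>0$. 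I expect the main subtlety to be exactly this last bookkeeping step, namely recognizing that the sole role of the extra hypothesis is to promote the two ``diagonal'' distance bounds to all four cross-combinations, which is precisely what is needed to control $|x^*_\varepsilon(z)|$ and $|t^*(y_\varepsilon)|$ together; without it one would be stranded in the mixed cases where neither term is bounded away from $1$.
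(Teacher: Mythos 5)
Your proof is correct and follows essentially the same route as the paper's: the same choice of a norming functional for $x$ and a (nearly) norming vector for $y^*$, the same four distance inequalities $\|x\pm z\|+\|y^*\pm t^*\|\ge\varepsilon$ extracted from the symmetry hypothesis, and the same dichotomy ($\min(\|x-z\|,\|x+z\|)\ge\varepsilon/2$ or $\min(\|y^*-t^*\|,\|y^*+t^*\|)\ge\varepsilon/2$) fed into the moduli of convexity of $X$ and $Y^*$. The only real difference is cosmetic: you invoke reflexivity of $Y$ to take $y_\varepsilon$ exactly norming, which cleans up the bookkeeping and yields the larger modulus $\varpi(\varepsilon)=2\min(\delta_X(\varepsilon/2),\delta_{Y^*}(\varepsilon/2))$, whereas the paper uses an $\varepsilon$-dependent almost-norming vector $y_{y^*,\varepsilon}$ and absorbs the resulting error terms into the smaller modulus $\frac{1}{3}\min(\delta_X(\varepsilon/2),\delta_{Y^*}(\varepsilon/2))$.
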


\begin{proof} Let $\delta_X(\cdot)$ and $\delta_{Y^*}(\cdot)$ be the modulus of uniform convexity of $X$ and $Y^*$ respectively ($Y^*$ is uniformly convex since $Y$ is a smooth Banach space). Let us denote
$$\varpi (\varepsilon):=\frac{1}{3}\min(\delta_X(\frac{\varepsilon}{2}), \delta_{Y^*}(\frac{\varepsilon}{2})),$$
and for every $(x,y^*) \in L$, let $x^*_{x}\in S_{X^*}$ be such that $x^*_{x}(x)=1$ and $y_{y^*,\varepsilon}\in S_Y$ be such that 
$$y^*(y_{y^*,\varepsilon})>1-\varpi (\varepsilon).$$
Now, let $(z, t^*)\in L$ be such that $d_H(\Gamma(x,y^*),\Gamma(z,t^*))\geq \varepsilon$. We are going to prove that 
\begin{eqnarray} \label{Pr}
|x^*_{x}(z)|+ |t^*(y_{y^*,\varepsilon})| &\leq& x^*_{x}(x) + y^*(y_{y^*,\varepsilon}) -\varpi (\varepsilon),
\end{eqnarray}
which gives  the property $(P)$. Indeed, since $d_H(\Gamma(x,y^*),\Gamma(z,t^*))\geq \varepsilon$ then by assymption we also have $d_H(\Gamma(-x,y^*),\Gamma(z,t^*))\geq \varepsilon$, thus we have
\begin{eqnarray} \label{Mr}
\|x-z\|+\|y^*-t^*\|\geq \varepsilon \textnormal{ and } \|x+z\|+\|y^*+t^*\|\geq \varepsilon.
\end{eqnarray}
\begin{eqnarray} \label{Mr1}
\|x+z\|+\|y^*-t^*\|\geq \varepsilon \textnormal{ and } \|x-z\|+\|y^*+t^*\|\geq \varepsilon.
\end{eqnarray}
We need to treat two cases.

{\bf Case 1:} Suppose that $\|x-z\|<\frac{\varepsilon}{2}$ or  $\|x+z\|<\frac{\varepsilon}{2}$. Then, using $(\ref{Mr})$ and $(\ref{Mr1})$, we get that  $\|y^*-t^*\|\geq \frac{\varepsilon}{2}$ and $\|y^*+t^*\| \geq \frac{\varepsilon}{2}$. Thus, by the uniform convexity of $Y^*$, we have  for all $y\in S_{Y}$
\begin{eqnarray*} 
\langle \frac{y^*+t^*}{2}, y \rangle \leq \|\frac{y^*+t^*}{2}\| \leq 1- \delta_{Y^*}(\frac{\varepsilon}{2}).
\end{eqnarray*}
\begin{eqnarray*} 
\langle \frac{y^*-t^*}{2}, y \rangle \leq \|\frac{y^*-t^*}{2}\| \leq 1- \delta_{Y^*}(\frac{\varepsilon}{2}).
\end{eqnarray*}
By using the fact that $y^*(y_{y^*,\varepsilon})>1-\varpi (\varepsilon)$ and applying the above inequalities with $y=y_{y^*,\varepsilon}$, we get 
\begin{eqnarray*} 
t^* (y_{y^*,\varepsilon}) \leq y^*(y_{y^*,\varepsilon})- \varpi (\varepsilon),
\end{eqnarray*}
\begin{eqnarray*} 
- t^* (y_{y^*,\varepsilon})  \leq y^*(y_{y^*,\varepsilon}) - \varpi (\varepsilon) .
\end{eqnarray*}
Equivalently, we have
\begin{eqnarray*} 
|t^* (y_{y^*,\varepsilon})| \leq y^*(y_{y^*,\varepsilon})- \varpi (\varepsilon).
\end{eqnarray*}
On the other hand, we have $|x^*_x (z)| \leq 1=x^*_x(x)$. Thus, 
\begin{eqnarray*} 
|x^*_x (z)| + |t^* (y_{y^*,\varepsilon})| &\leq& x^*_x(x)+y^*(y_{y^*,\varepsilon})-\varpi (\varepsilon).
\end{eqnarray*}
This gives $(\ref{Pr})$.

{\bf Case 2:} Suppose that $\|x-z\| \geq \frac{\varepsilon}{2}$ and $\|x+z\| \geq \frac{\varepsilon}{2}$. By the uniform convexity of $X$, we have for all $x^* \in S_{X^*}$
\begin{eqnarray*} 
\langle x^*, \frac{x+z}{2}\rangle \leq \|\frac{x+z}{2}\| \leq 1- \delta_X(\frac{\varepsilon}{2}).
\end{eqnarray*}
\begin{eqnarray*} 
\langle x^*, \frac{x-z}{2}\rangle \leq \|\frac{x-z}{2}\| \leq 1- \delta_X(\frac{\varepsilon}{2}).
\end{eqnarray*}
Thus, combining the above two inequalities, we see that for all $x^* \in S_{X^*}$ such that $x^*(x)=1$ we have
\begin{eqnarray*} 
|x^*(z)| \leq  1- 2\delta_X(\frac{\varepsilon}{2})=x^*(x)-2\delta_X(\frac{\varepsilon}{2}).
\end{eqnarray*}
On the other hand, using again the fact that $y^*(y_{y^*,\varepsilon})>1-\varpi (\varepsilon)$, we have
\begin{eqnarray*}
|t^*(y_{y^*,\varepsilon})|\leq 1< y^*(y_{y^*,\varepsilon}) +\varpi (\varepsilon)
\end{eqnarray*}
Thus, we have (with $x^*=x^*_{x}$)
\begin{eqnarray*} 
|x^*_{x}(z)|+ |t^*(y_{y^*,\varepsilon})| &\leq&  x^*_{x}(x) + y^*(y_{y^*,\varepsilon}) +\varpi (\varepsilon) -2\delta_X(\frac{\varepsilon}{2})  \\
&\leq& x^*_{x}(x) + y^*(y_{y^*,\varepsilon}) -\varpi (\varepsilon)
\end{eqnarray*}
This gives $(\ref{Pr})$.
\end{proof}

We give in the following proposition a general but more concrete examples of nonempty closed and symmetric subsets having the property $(P)$. 

\begin{proposition}  \label{corex} % Let $X$ be a uniformly convex Banach space and $Y$ be a uniformly smooth Banach space.
Let $X$ and $Y$ be Banach spaces and $\theta : S_X \times S_{Y^*} \to \R$ be a non-zero function  satisfying the following properties:

$(i)$ $\theta$ is uniformly continuous on $S_X \times S_{Y^*}$ with a modulus of uniform continuity $\alpha(\cdot)$.

$(ii)$ $\theta(-x, y^*)=\theta(x,-y^*)=-\theta(x,y^*)$, for all $(x,y^*)\in S_X \times S_{Y^*}$.

Then, for every $s>0$ such that $\theta^{-1}([s,+\infty[)\neq \emptyset$ and for every nonempty  subset $L$ of $\theta^{-1}([s,+\infty[)$, we have that: $\forall (x,y^*), (z,t^*)\in L$, 
$$d_H(\Gamma(-x,y^*), \Gamma(z,t^*))  \geq \alpha(s).$$
Consequently (see Lemma \ref{Prop3}), if we assume that $X$ is uniformly convex and $Y$ is uniformly smooth, then every nonempty closed and symetric subset $L$ of $\theta^{-1}([s,+\infty[)$ has the property $(P)$. In particular, the closed and symmetric set $\theta^{-1}([s,+\infty[)$ as well the set $\theta^{-1}(\lbrace  s\rbrace)$ have the property $(P)$, whenever $\theta^{-1}(\lbrace  s\rbrace)\neq \emptyset$.
\end{proposition}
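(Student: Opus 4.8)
The plan is to exploit the oddness relations in $(ii)$ to manufacture a fixed gap between the values of $\theta$ at $(-x,y^*)$ and at the two points $(z,t^*)$, $(-z,-t^*)$, and then to convert this value-gap into a lower bound on the Hausdorff distance by uniform continuity. First I would record the sign consequences of $(ii)$. From $\theta(x,y^*)\ge s$ and $\theta(-x,y^*)=-\theta(x,y^*)$ we get $\theta(-x,y^*)\le -s$. Applying $(ii)$ twice gives $\theta(-z,-t^*)=-\theta(z,-t^*)=\theta(z,t^*)\ge s$, so both $(z,t^*)$ and $(-z,-t^*)$ keep $\theta\ge s$ while $(-x,y^*)$ has $\theta\le -s$; hence the $\theta$-values differ by at least $2s$ across each of the pairs $\{(-x,y^*),(z,t^*)\}$ and $\{(-x,y^*),(-z,-t^*)\}$.

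Next I would expand the distance. Recall
\[
d_H(\Gamma(-x,y^*),\Gamma(z,t^*)) = \min\bigl(\|x+z\|+\|y^*-t^*\|,\ \|x-z\|+\|y^*+t^*\|\bigr),
\]
where the first term is exactly $\|(-x,y^*)-(z,t^*)\|$ and the second is $\|(-x,y^*)-(-z,-t^*)\|$ for the sum-norm on $X\times Y^*$. By the previous step each of these two distances joins a point with $\theta\le -s$ to a point with $\theta\ge s$, so the corresponding $\theta$-gap is at least $2s$, in particular more than $s$. Since $\theta$ is uniformly continuous with modulus $\alpha$ --- equivalently, two points whose $\theta$-values differ by more than $s$ must be at distance at least $\alpha(s)$ --- both terms in the minimum are bounded below by $\alpha(s)$, and therefore $d_H(\Gamma(-x,y^*),\Gamma(z,t^*))\ge\alpha(s)$. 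This already proves the first assertion, and note it uses neither closedness nor symmetry of $L$: it holds for any two points of $\theta^{-1}([s,+\infty[)$.

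For the ``Consequently'' part I would feed this bound into Lemma \ref{Prop3}. Its hypothesis is the implication $d_H(\Gamma(x,y^*),\Gamma(z,t^*))\ge\varepsilon\Rightarrow d_H(\Gamma(-x,y^*),\Gamma(z,t^*))\ge\varepsilon$ for all small $\varepsilon$; but the unconditional bound just obtained makes the conclusion $d_H(\Gamma(-x,y^*),\Gamma(z,t^*))\ge\alpha(s)\ge\varepsilon$ hold automatically as soon as $\varepsilon\le\alpha(s)$, so that implication is trivially satisfied for every $\varepsilon$ small enough. With $X$ uniformly convex and $Y$ uniformly smooth, Lemma \ref{Prop3} then yields property $(P)$ for every nonempty closed symmetric $L\subseteq\theta^{-1}([s,+\infty[)$. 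To cover the two named sets I would finally check that $\theta^{-1}([s,+\infty[)$ and $\theta^{-1}(\{s\})$ are closed (continuity of $\theta$ together with closedness of $S_X\times S_{Y^*}$) and symmetric (the identity $\theta(-x,-y^*)=\theta(x,y^*)$ from $(ii)$ preserves both $[s,+\infty[$ and $\{s\}$).

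The computation is short, and the only point that demands care is the minimum defining $d_H$: one must secure the $\theta$-gap on both branches, which is exactly why computing $\theta(-z,-t^*)$ --- and not merely $\theta(z,t^*)$ --- is essential. A minor secondary point is aligning the chosen convention for the modulus $\alpha$ with the value-gap; working with the gap $2s$ (comfortably larger than $s$) leaves room for either a strict or a non-strict form of uniform continuity and still delivers the bound $\alpha(s)$.
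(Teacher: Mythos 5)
Your proof is correct and follows essentially the same route as the paper: use property $(ii)$ to force a $\theta$-gap of at least $2s$ between $(-x,y^*)$ and each of $(z,t^*)$, $(-z,-t^*)$, convert that gap into the bound $\alpha(s)$ on both branches of the minimum defining $d_H$ via uniform continuity, and then observe that this unconditional bound makes the hypothesis of Lemma \ref{Prop3} trivially satisfied for all $\varepsilon\le\alpha(s)$. The only cosmetic difference is that you argue by contraposition of uniform continuity where the paper argues by contradiction.
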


\begin{proof} Let $\alpha(\cdot)$ be a modulus of uniform continuity of $\theta$, that is, for every $r>0$ and every $(x_1,y^*_1), (x_2,y^*_2) \in S_X \times S_{Y^*}$
\begin{eqnarray*} 
\|x_1-x_2\|+\|y^*_1-y^*_2\| < \alpha(r) \Longrightarrow |\theta(x_2,y^*_2)-\theta(x_1,y^*_1)|<r.
\end{eqnarray*}
Let $s>0$ be such that $\theta^{-1}([s,+\infty[)\neq \emptyset$ and let $L$ be any nonempty closed and  symmetric subset of $\theta^{-1}([s,+\infty[)$. We prove that for every $(x,y^*), (z,t^*)\in L$, we have  $d_H(\Gamma(-x,y^*), \Gamma(z,t^*))  \geq \alpha(s)$. Indeed, suppose by contradiction that 
$$\|x+z\|+ \|y^*-t^*\| < \alpha(s).$$
Then, by the assymption $(ii)$, the  uniform continuity of $\theta$ and the definition of $L$, we get
\begin{eqnarray*} 
2s\leq \theta(z,t^*)+\theta(x,y^*)=\theta(z,t^*)-\theta(-x,y^*)<s,
\end{eqnarray*}
wich is a contradiction. Thus, we have $\|x+z\|+ \|y^*-t^*\| \geq \alpha(s)$. In a similar way, we prove that $ \|x-z\|+\|y^*+t^*\| \geq \alpha(s)$. Thus, $d_H(\Gamma(-x,y^*), \Gamma(z,t^*))  \geq \alpha(s)$. It follows that the set $L$ satisfies the property of Lemma \ref{Prop3} for every $\varepsilon \in (0, \alpha(s))$. Hence,  $L$ has the property $(P)$ whenever $X$ is uniformly convex and $Y$ is smooth by Lemma \ref{Prop3}.
\end{proof}
\begin{remark} \label{Pix}
Notice that as an immediate consequence of the above proposition we get that if $X$ is uniformly convex, $Y$ is uniformly smooth and $G: S_X \to S_Y$ is a uniformly continuous mapping satisfying $G(-x)=-G(x)$ for all $x\in S_X$, then the map defined by $\theta_G: (x,y^*)\mapsto y^*(G(x))$ for all $(x,y^*)\in S_X\times S_{Y^*}$, satisfies the hypothesis of Proposition \ref{corex}. It follows that the nonempty closed and symmetric set $$\Pi_G (X,Y)=\lbrace (x,y^*)\in S_X\times S_{Y^*}:  y^*(G(x))=1  \rbrace=\theta^{-1}_G(\lbrace 1\rbrace)\neq \emptyset,$$ has the property $(P)$. Notice that we recover the classical set $\Pi(X)$ by taking $G=I$ the identity mapping. We refeer to \cite{KMMPQ, MMPa} for the study of {\it "numerical range"} with respect some fixed bounded linear operator $G: X \to Y$ such that $\|G\|=1$, where the set 
$$\theta^{-1}_{G}([1- \delta,+\infty[)=\lbrace (x,y^*)\in S_X \times S_{Y^*}: y^*(G x)\geq 1-\delta \rbrace\neq \emptyset,$$
for $0< \delta <1$, is considered. Notice, that from Proposition \ref{corex}, the set $\theta^{-1}_{G}([1- \delta,+\infty[)$ has also property $(P)$.
\end{remark}

\subsection{The main result}
Let $X,Y, W$ be Banach spaces. We denote $\mathcal{F}(X,Y)$ the space of all finite rank bounded linear operators from $X$ into $Y$ and 
$$[\mathcal{F}(X, Y)]\oplus W:=\lbrace [T]\oplus w:  T\in \mathcal{F}(X, Y),  w\in W\rbrace,$$
where for each $T \in \mathcal{F}(X, Y)$ and each $ w\in W$, the map $[T]\oplus w$ is defined from $X\times Y^*$ into $W$ by $ (x,y^*)\mapsto y^*(T(x))w$.
\begin{lemma} \label{lemC0} Let $X, Y, W$  be Banach spaces  and $L$ be a symmetric  and closed subset of $S_X\times S_{Y^*}$ having property $(P)$. Let $(\mathcal{Z}, \|\cdot\|_\mathcal{Z})$ be a Banach space included in $C_{b,e}(L, W)$, such that:

$(i)$ $\|\cdot\|_\mathcal{Z}\geq \|\cdot\|_{\infty}$.

$(ii)$ $\mathcal{Z}$ contains the space $[\mathcal{F}(X, Y)]\oplus W$.

 Then, the map $\gamma_\mathcal{P}$ is well defined and $(\mathcal{C}_L,\gamma_\mathcal{P})$ (see the formulas $(\ref{F1})$ and $(\ref{F2})$) is a complete pseudometric subset of $\widetilde{\mathcal{Z}}^*:=(I_L(\mathcal{Z}))^*$.
\end{lemma}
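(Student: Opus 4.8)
The plan is to verify, in turn, that the functionals in $\mathcal{C}_L$ really are elements of $\widetilde{\mathcal{Z}}^*$, that $\gamma_\mathcal{P}$ does not depend on the chosen representative (well-definedness), that it satisfies the three pseudometric axioms, and finally that the associated quotient metric is complete. First, for $x\in L$ and $w^*\in S_{W^*}$ the estimate $|w^*(\Phi(x))|\le \|\Phi\|_\infty\le \|\Phi\|_\mathcal{Z}$, which uses hypothesis $(i)$, shows that each $w^*\circ\delta_{\Gamma(x)}$ is continuous on $\widetilde{\mathcal{Z}}$ with norm at most one, so $\mathcal{C}_L\subseteq \widetilde{\mathcal{Z}}^*$. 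The three axioms of a pseudometric are then inherited verbatim from the fact, recorded in Proposition \ref{P1}$(i)$, that $d_H$ is a genuine metric on $\Gamma(L)$. Thus everything reduces to the single delicate point: the \emph{well-definedness} of $\gamma_\mathcal{P}$, namely that whenever two representatives define the same functional, $w_1^*\circ\delta_{\Gamma(x_1)}=w_2^*\circ\delta_{\Gamma(x_2)}$ in $\widetilde{\mathcal{Z}}^*$, one necessarily has $d_H(\Gamma(x_1),\Gamma(x_2))=0$, that is $\Gamma(x_1)=\Gamma(x_2)$, so that formula $(\ref{F2})$ is unambiguous.

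To establish this I would exploit hypothesis $(ii)$ by feeding the test functions $[T]\oplus w$ into the assumed equality of functionals. Writing $x_i=(a_i,b_i^*)$ and taking $T$ to be the rank-one operator $a\mapsto x^*(a)y$ (which lies in $\mathcal{F}(X,Y)$), the equality $w_1^*(\Phi(x_1))=w_2^*(\Phi(x_2))$ becomes the scalar identity $x^*(a_1)\,b_1^*(y)\,w_1^*(w)=x^*(a_2)\,b_2^*(y)\,w_2^*(w)$, valid for all $x^*\in X^*$, $y\in Y$ and $w\in W$. Reading this as an identity between the functionals $x^*(a_1)b_1^*(y)\,w_1^*$ and $x^*(a_2)b_2^*(y)\,w_2^*$ of $w$, and choosing $x^*,y$ with $x^*(a_1)b_1^*(y)\neq 0$, forces $w_1^*=\epsilon\,w_2^*$ with $\epsilon\in\{-1,+1\}$ (both being unit vectors). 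Substituting back and reading the resulting bilinear identity $\epsilon\,x^*(a_1)b_1^*(y)=x^*(a_2)b_2^*(y)$ separately in $x^*$ and in $y$ forces $a_2=\sigma a_1$ and $b_2^*=\tau b_1^*$ with $\sigma,\tau\in\{-1,+1\}$, together with the compatibility $\sigma\tau=\epsilon$. When $\sigma=\tau$ we get $x_2=\pm x_1$, hence $\Gamma(x_1)=\Gamma(x_2)$, as wanted.

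The remaining, and genuinely new, case is the mismatched one, $\sigma\neq\tau$, in which $x_2$ equals $(a_1,-b_1^*)$ or $(-a_1,b_1^*)$ and $\Gamma(x_1)\neq\Gamma(x_2)$. This is exactly where property $(P)$ (Definition \ref{(P)}) enters, and it is the main obstacle: I would show that such a configuration is incompatible with $x_1,x_2$ both lying in $L$. Indeed, in either subcase $d_H(\Gamma(x_1),\Gamma(x_2))=2$, so applying property $(P)$ at the point $x_1$ with any admissibly small $\varepsilon$ and testing against $(z,t^*)=x_2$ yields $(x^*_\varepsilon,y_\varepsilon)\in B_{X^*}\times B_Y$ with $x^*_\varepsilon(a_1)+b_1^*(y_\varepsilon)-\varpi(\varepsilon)\ge |x^*_\varepsilon(a_1)|+|b_1^*(y_\varepsilon)|$, the right-hand side being left unchanged by the sign flips. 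This is impossible, since $x^*_\varepsilon(a_1)\le|x^*_\varepsilon(a_1)|$, $b_1^*(y_\varepsilon)\le|b_1^*(y_\varepsilon)|$ and $\varpi(\varepsilon)>0$. Hence the mismatched case cannot occur, so $\sigma=\tau$, $\Gamma(x_1)=\Gamma(x_2)$, and $\gamma_\mathcal{P}$ is well defined.

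Finally, for completeness I would observe that, by the well-definedness just proved, the assignment $w^*\circ\delta_{\Gamma(x)}\mapsto \Gamma(x)$ is independent of the representative and descends to a bijection between the quotient $\mathcal{C}_L/\sim$ and $\Gamma(L)$; by the very definition $(\ref{F2})$ of $\gamma_\mathcal{P}$ this bijection is an isometry onto $(\Gamma(L),d_H)$. Since $S_X\times S_{Y^*}$ is closed in $X\times Y^*$, the set $L$ is a symmetric closed subset of $X\times Y^*$, so Proposition \ref{P1}$(i)$ guarantees that $(\Gamma(L),d_H)$ is a complete metric space; transporting completeness through the isometry shows $(\mathcal{C}_L/\sim,d_{\gamma_\mathcal{P}})$ is complete, i.e. $(\mathcal{C}_L,\gamma_\mathcal{P})$ is a complete pseudometric subset of $\widetilde{\mathcal{Z}}^*$, as required.
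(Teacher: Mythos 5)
Your proof is correct, and its skeleton (continuity of the evaluations from hypothesis $(i)$, well-definedness as the crux via rank-one tests and property $(P)$, completeness transported from $(\Gamma(L),d_H)$ via Proposition \ref{P1}) matches the paper's; but the execution of the well-definedness step is genuinely different. The paper, after testing with rank-one operators $x^*(\cdot)y_\alpha$ built from a norming functional for $x$ and approximately norming vectors for $y^*$, first proves $|w^*(e)|=|v^*(e)|$ for all $e\in S_W$ and deduces the modulus identity $|y^*(T(x))|=|t^*(T(z))|$ for \emph{every} finite-rank $T$; it then invokes property $(P)$ to manufacture a specific rank-two operator $T=x^*_\varepsilon(\cdot)z_\varepsilon+x^*(\cdot)y_\varepsilon$ whose values violate that identity when $\Gamma(x,y^*)\neq\Gamma(z,t^*)$, through a chain of estimates with error terms $\varpi(\varepsilon)/2$. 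You instead solve the rank-one identity completely: exploiting that all vectors involved are unit vectors and the scalars are real, you classify the solutions as $a_2=\sigma a_1$, $b_2^*=\tau b_1^*$, $w_1^*=\sigma\tau\, w_2^*$ with $\sigma,\tau\in\{\pm 1\}$, so the only possible failure of $\Gamma(x_1)=\Gamma(x_2)$ is the mismatched-sign configuration; property $(P)$ then eliminates that case in one line, since the sign flips leave $|x^*_\varepsilon(z)|+|t^*(y_\varepsilon)|$ unchanged and the separation inequality collapses to $-\varpi(\varepsilon)\geq 0$. Your route isolates exactly what property $(P)$ is needed for (the set $L$ cannot contain two points differing by a sign flip in a single coordinate) and avoids the approximate-norming vectors and the estimation work; the paper's route avoids the algebraic classification (and hence the reliance on the $\pm 1$ dichotomy of real scalars, which would need adjusting for unimodular scalars in a complex setting) and reuses the same separation scheme that reappears in the proof of Lemma \ref{lemC}, keeping the two lemmas' proofs uniform. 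Both arguments are complete and correct.
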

\begin{proof} Notice that from the fact that $\|\cdot\|_\mathcal{Z}\geq \|\cdot\|_{\infty}$, we get that the evaluations $w^*\circ \delta_{\Gamma(x,y^*)}$ are linear continuous from $\widetilde{\mathcal{Z}}$ into $\R$ and so $\mathcal{C}_L$ is a subset of the dual $\widetilde{\mathcal{Z}}^*$. To see that $\gamma_\mathcal{P}$ is well defined, it suffices to prove that: if $w^*\circ \delta_{\Gamma(x,y^*)}= v^*\circ \delta_{\Gamma(z,t^*)}$ on $\widetilde{\mathcal{Z}}=I_L(\mathcal{Z})$, then $\Gamma(x,y^*) =\Gamma(z,t^*)$. Indeed, suppose that $w^*\circ \delta_{\Gamma(x,y^*)}= v^*\circ \delta_{\Gamma(z,t^*)}$ on $\widetilde{\mathcal{Z}}=I_L(\mathcal{Z})$. In particular, for all $T\in \mathcal{F}(X,Y)$ and all $e\in S_W$, $$w^*\circ \delta_{\Gamma(x,y^*)}(I_L([T]\oplus e))= v^*\circ \delta_{\Gamma(z,t^*)}(I_L([T]\oplus e)).$$ 
This gives,  
\begin{eqnarray} \label{R1}
y^*(T(x))w^*(e)=t^*(T(z))v^*(e),
\end{eqnarray}
for all $T\in \mathcal{F}(X,Y)$ and all $e\in S_W$. Let $x^*\in X^*, y_\alpha\in Y$ be such that $\|x^*\|=x^*(x)=1$ and $\|y_\alpha\|=1$, $y^*(y_\alpha)\geq 1-\alpha$ for every $\alpha\in (0,1)$. Thus, with  $T_{x,y^*,\alpha}=x^*(\cdot)y_\alpha \in \mathcal{F}(X,Y)$, we get that 
\begin{eqnarray} \label{RR}
w^*(e)y^*(y_\alpha)=t^*(y_\alpha)x^*(z)v^*(e),
\end{eqnarray}
 for every $\alpha \in (0,1)$ and so we see that $|w^*(e)|\leq |v^*(e)|$ for all $e\in S_W$. In a similar way,  we get $|v^*(e)|\leq |w^*(e)|$ for all $e\in S_W$. Thus,  $|w^*(e)|= |v^*(e)|$ for all $e\in S_W$. Using $(\ref{R1})$, we obtain for all $T\in \mathcal{F}(X,Y)$,
\begin{eqnarray} \label{R2}
|y^*(T(x))|=|t^*(T(z))|.
\end{eqnarray}
Suppose by contradiction that $\Gamma(x,y^*) \neq \Gamma(z,t^*)$, then for some $\varepsilon >0$, we have that $d_H(\Gamma(x,y^*), \Gamma(z,t^*))  \geq \varepsilon$. Using property $(P)$, there exists $(x^*_{\varepsilon},y_{\varepsilon})\in  B_{X^*}\times B_Y$  such that 
$$x^*_{\varepsilon}(x) +y^*(y_\varepsilon)-  \varpi (\varepsilon) \geq  |x^*_{\varepsilon} (z)|+|t^*(y_\varepsilon)|.$$
Let $x^*\in X^*, z_\varepsilon \in Y$ be such that $\|x^*\|=x^*(x)=1$ and $\|z_\varepsilon\|=1$, $y^*(z_\varepsilon)\geq 1-\varpi (\varepsilon)/2$.  Define, $T_{x,y^*,\varepsilon} =x^*_{\varepsilon}(\cdot)z_\varepsilon+x^*(\cdot)y_{\varepsilon}\in \mathcal{F}(X,Y)$. We see that 
\begin{eqnarray*}
y^*(T_{x,y^*,\varepsilon}(x)) - \varpi (\varepsilon)/2 &=&x^*_{\varepsilon} (x) y^*(z_\varepsilon)+  y^*(x_{\varepsilon})- \varpi (\varepsilon)/2\\
                                                                                             &=&  x^*_{\varepsilon} (x) (y^*(z_\varepsilon)-1)+  x^*_{\varepsilon} (x) + y^*(x_{\varepsilon})- \varpi (\varepsilon)/2\\
&\geq& x^*_{\varepsilon} (x) + y^*(x_{\varepsilon}) - \varpi (\varepsilon)\\
&\geq& |x^*_{\varepsilon} (z)|+|t^*(y_\varepsilon)|.
\end{eqnarray*} 
 On the other hand,  
$$|t^*(T_{x,y^*,\varepsilon}(z))|=|x^*_{\varepsilon}(z)t^*(z_\varepsilon)+x^*(z)t^*(y_{\varepsilon})|\leq |x^*_{\varepsilon} (z)| +| t^*(y_{\varepsilon})|.$$ By the previous two inequalities, we obtain 
$$y^*(T_{x,y^*,\varepsilon}(x)) -  \varpi (\varepsilon) /2\geq |t^*(T_{x,y^*,\varepsilon}(z))|,$$ which  contradict $(\ref{R2})$. Hence, $\Gamma(x,y^*) = \Gamma(z,t^*)$  and so $\gamma_\mathcal{P}$ is well defined. Finally, since $(\Gamma(L), d_H)$ is a complete metric space (see Proposition $\ref{P1}$), we see that $(\mathcal{C}_L,\gamma_\mathcal{P})$ is a complete pseudometric space. 
\end{proof}

\begin{lemma} \label{lemC} Let $X, Y, W$  be Banach spaces and $L$ be a symmetric and closed subset of $S_X\times S_{Y^*}$ having property $(P)$. Let $(\mathcal{Z}, \|\cdot\|_\mathcal{Z})$ be a Banach space included in $C_{b,e}(L, W)$, such that:

$(i)$ $\|\cdot\|_\mathcal{Z}\geq \|\cdot\|_{\infty}$.

$(ii)$ $\mathcal{Z}$ contains the space $[\mathcal{F}(X,Y)]\oplus W$ and is such that $\|[T]\oplus w\|_\mathcal{Z}\leq \|T\|\|w\|$ for all $(T,w)\in \mathcal{F}(X,Y)\times W$.

 Then, the set  $(\mathcal{C}_L,\gamma_\mathcal{P})$ has the $w^*USP$ in $\widetilde{\mathcal{Z}}^*$, where $\widetilde{\mathcal{Z}}=I_L(\mathcal{Z})$.
\end{lemma}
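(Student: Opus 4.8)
The plan is to verify Definition~\ref{USP} directly, producing for each $\varepsilon>0$ small enough a modulus $\varpi_{\mathcal{C}_L}(\varepsilon)>0$ and, for each $p=w^*\circ\delta_{\Gamma(x,y^*)}\in\mathcal{C}_L$ (with $(x,y^*)\in L$ and $w^*\in S_{W^*}$), a separating element of the unit ball $B_{\widetilde{\mathcal{Z}}}$. Since $I_L$ is an isometry, it is equivalent to exhibit $\Phi\in\mathcal{Z}$ with $\|\Phi\|_{\mathcal{Z}}\le 1$, the associated functional being $\Phi_{p,\varepsilon}:=I_L(\Phi)$; for any $q=v^*\circ\delta_{\Gamma(z,t^*)}\in\mathcal{C}_L$ one then has $\langle p,\Phi_{p,\varepsilon}\rangle=w^*(\Phi(x,y^*))$ and $\langle q,\Phi_{p,\varepsilon}\rangle=v^*(\Phi(z,t^*))$. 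Recalling that $\gamma_\mathcal{P}(p,q)=d_H(\Gamma(x,y^*),\Gamma(z,t^*))$, the goal reduces to finding $\Phi\in B_{\mathcal{Z}}$ with $w^*(\Phi(x,y^*))-\varpi_{\mathcal{C}_L}(\varepsilon)\ge v^*(\Phi(z,t^*))$ for all $(z,t^*)\in L$, $v^*\in S_{W^*}$ satisfying $d_H(\Gamma(x,y^*),\Gamma(z,t^*))\ge\varepsilon$.

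First I would apply property $(P)$ to the point $(x,y^*)$ and to $\varepsilon$, obtaining the modulus $\varpi(\varepsilon)>0$ and a pair $(x^*_\varepsilon,y_\varepsilon)\in B_{X^*}\times B_Y$ satisfying the separation inequality of Definition~\ref{(P)}. The separating operator is then built exactly as in the proof of Lemma~\ref{lemC0}: choose $x^*\in S_{X^*}$ with $x^*(x)=1$ and $z_\varepsilon\in S_Y$ with $y^*(z_\varepsilon)\ge 1-\varpi(\varepsilon)/2$, and set $T:=x^*_\varepsilon(\cdot)z_\varepsilon+x^*(\cdot)y_\varepsilon\in\mathcal{F}(X,Y)$. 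The same computation as in Lemma~\ref{lemC0} yields the key scalar estimate
$$y^*(T(x))-\frac{\varpi(\varepsilon)}{2}\ \ge\ |t^*(T(z))|,$$
whence in particular $y^*(T(x))\ge\varpi(\varepsilon)/2>0$, while the triangle inequality gives $\|T\|\le 2$ and $y^*(T(x))\le 2$. To pass to the vector-valued setting I would pick $e\in S_W$ with $w^*(e)>1-\varpi(\varepsilon)/8$ (possible since $\|w^*\|=1$, and $\varpi(\varepsilon)\le 2$ forces $\varpi(\varepsilon)/8<1$) and set $\Phi:=\tfrac12\big([T]\oplus e\big)$. Here the new hypothesis $(ii)$, namely $\|[T]\oplus w\|_{\mathcal{Z}}\le\|T\|\,\|w\|$, is exactly what guarantees $\|\Phi\|_{\mathcal{Z}}\le\tfrac12\|T\|\,\|e\|\le 1$, so that $\Phi\in B_{\mathcal{Z}}$; this norm control is precisely the feature distinguishing Lemma~\ref{lemC} from Lemma~\ref{lemC0}.

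It then remains to check the separation. Since $\Phi(x,y^*)=\tfrac12 y^*(T(x))\,e$ and $\Phi(z,t^*)=\tfrac12 t^*(T(z))\,e$, one has $\langle p,\Phi_{p,\varepsilon}\rangle=\tfrac12 y^*(T(x))\,w^*(e)$ and $\langle q,\Phi_{p,\varepsilon}\rangle=\tfrac12 t^*(T(z))\,v^*(e)$. Using $y^*(T(x))\in(0,2]$ and $w^*(e)>1-\varpi(\varepsilon)/8$ one bounds $\tfrac12 y^*(T(x))w^*(e)\ge\tfrac12 y^*(T(x))-\varpi(\varepsilon)/8$ from below, and using $|v^*(e)|\le1$ together with the key scalar estimate one bounds $\tfrac12 t^*(T(z))v^*(e)\le\tfrac12|t^*(T(z))|\le\tfrac12 y^*(T(x))-\varpi(\varepsilon)/4$ from above; subtracting the two gives
$$\langle p,\Phi_{p,\varepsilon}\rangle-\frac{\varpi(\varepsilon)}{8}\ \ge\ \langle q,\Phi_{p,\varepsilon}\rangle.$$
Thus $\varpi_{\mathcal{C}_L}(\varepsilon):=\varpi(\varepsilon)/8$ works, and since it depends only on $\varepsilon$ while $\Phi_{p,\varepsilon}$ depends only on $p$ and $\varepsilon$, this is exactly the $w^*USP$. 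The main obstacle, already overcome in Lemma~\ref{lemC0}, is the mismatch between the \emph{sum} $|x^*_\varepsilon(z)|+|t^*(y_\varepsilon)|$ that property $(P)$ controls and the \emph{product} that a single rank-one operator would produce; this is what forces the rank-two linearizing choice of $T$. The genuinely new point here is purely quantitative: one must keep the separating element inside the unit ball and keep the separation modulus bounded below uniformly in $p$, which is handled by the normalization $\tfrac12$ combined with the norm bound in hypothesis $(ii)$.
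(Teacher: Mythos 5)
Your proof is correct and follows essentially the same route as the paper's: both apply property $(P)$ to build the rank-two operator $x^*_\varepsilon(\cdot)z_\varepsilon+x^*(\cdot)y_\varepsilon$, tensor it with a vector $e\in S_W$ nearly norming $w^*$, and invoke hypothesis $(ii)$ to keep the resulting element of $[\mathcal{F}(X,Y)]\oplus W$ in the unit ball of $\mathcal{Z}$. The differences are only bookkeeping: you factor the argument through the scalar estimate already proved in Lemma~\ref{lemC0} and place the normalization $\tfrac12$ outside, getting the modulus $\varpi(\varepsilon)/8$, whereas the paper puts the $\tfrac12$ inside $T_{x,y^*,\varepsilon}$ and runs a single combined estimate with $\lambda=\varpi(\varepsilon)/\bigl(4(2+\varpi(\varepsilon))\bigr)$, getting $\varpi(\varepsilon)/4$.
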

\begin{proof}  Since $L$ has property $(P)$ we have:  for every $(x,y^*) \in L$, there exists $(x^*_{\varepsilon},y_{\varepsilon})\in  B_{X^*}\times B_Y$ satisfying: $\forall (z,t^*)\in L$
\begin{eqnarray}  \label{Lab}
	\hspace{5mm}&&\\
	d_H(\Gamma(x,y^*), \Gamma(z,t^*))  \geq \varepsilon \Longrightarrow x^*_{\varepsilon}(x) +y^*(y_\varepsilon)-  \varpi (\varepsilon) \geq  |x^*_{\varepsilon} (z)|+|t^*(y_\varepsilon)|.\nonumber
	\end{eqnarray}

For each $(x,y^*)\in L$ and each $\varepsilon>0$, let $x^*\in X^*, a_\varepsilon \in Y$ be such that $\|x^*\|=x^*(x)=1$ and $\|a_\varepsilon\|=1$, $y^*(a_\varepsilon)\geq 1-\varpi (\varepsilon)/8 $ and define, $T_{x,y^*,\varepsilon} =1/2(x^*_{\varepsilon}(\cdot)a_\varepsilon+x^*(\cdot)y_{\varepsilon})\in \mathcal{F}(X,Y)$. For every $w^*\in S_{W^*}$ and $\varepsilon >0$, choose and fix an $e_{w^*,\varepsilon}\in S_W$ such that 
\begin{eqnarray} \label{Lab1}
1\geq w^*(e_{w^*,\varepsilon}) >1-\lambda,
\end{eqnarray} 
where $\lambda := \frac{\varpi(\varepsilon)}{4(2+\varpi(\varepsilon))}. $ Consider the operator $[T_{x,y^*,\varepsilon}]\oplus e_{w^*,\varepsilon}\in \mathcal{Z}$.  Clearly, $\|I_L([T_{x,y^*,\varepsilon}]\oplus e_{w^*,\varepsilon})\|_{\widetilde{\mathcal{Z}}}=\|[T_{x,y^*,\varepsilon}]\oplus e_{w^*,\varepsilon}\|_\mathcal{Z}\leq 1$. Let $v^* \circ \delta_{\Gamma(z,t^*)} \in \mathcal{C}_L$ such that $$\gamma_\mathcal{P}(w^*\circ \delta_{\Gamma(x,y^*)}, v^*\circ \delta_{\Gamma(z,t^*)}):=d_H(\Gamma(x,y^*), \Gamma(z,t^*)) \geq \varepsilon.$$ On one hand, if we set $A:= v^*\circ \delta_{\Gamma(z,t^*)}(I_L([T_{x,y^*,\varepsilon}]\oplus e_{w^*,\varepsilon}))+\varpi (\varepsilon)/4$, we have  
\begin{eqnarray*}
 A &:=& \frac{x^*_{\varepsilon}(z)t^*(a_\varepsilon)+x^*(z)t^*(y_{\varepsilon})}{2}v^*(e_{w^*,\varepsilon}) +\varpi (\varepsilon)/4\\
                                                                                                                      &\leq& \frac{|x^*_{\varepsilon} (z)| +| t^*(y_{\varepsilon})|}{2} +\varpi (\varepsilon)/4.
\end{eqnarray*}
On the other hand, if we set $B:=w^*\circ \delta_{\Gamma(x,y^*)}(I_L([T_{x, y^*,\varepsilon}]\oplus e_{w^*,\varepsilon}))$, using property $(P)$, we have 
\begin{eqnarray*}
B &:=& \frac{x^*_{\varepsilon}(x)y^*(a_\varepsilon)+y^*(y_{\varepsilon})}{2}w^*(e_{w^*,\varepsilon}) \\
&=&\frac{[x^*_{\varepsilon}(x)+y^*(y_{\varepsilon})] +[x^*_{\varepsilon}(x)(y^*(a_\varepsilon)-1)]}{2}w^*(e_{w^*,\varepsilon})\\
&=& \frac{x^*_{\varepsilon}(x)+y^*(y_{\varepsilon})}{2}w^*(e_{w^*,\varepsilon})+ \frac{x^*_{\varepsilon}(x)(y^*(a_\varepsilon)-1)}{2}w^*(e_{w^*,\varepsilon})\\
&\geq&  \frac{x^*_{\varepsilon}(x)+y^*(y_{\varepsilon})}{2}w^*(e_{w^*,\varepsilon}) -\varpi (\varepsilon)/8\\
                                                                                                                     &\geq& \frac{|x^*_{\varepsilon} (x)| +| y^*(y_{\varepsilon})| + \varpi (\varepsilon)}{2}(1- \lambda)-\varpi (\varepsilon)/8 \hspace{1mm} (\textnormal{Thanks to }(\ref{Lab}) \& (\ref{Lab1}))\\
                                                                                                                       &=& \frac{|x^*_{\varepsilon} (x)| +| y^*(y_{\varepsilon})| +  \varpi (\varepsilon)}{2} -\frac{\lambda}{2}(|x^*_{\varepsilon} (x)| +| y^*(y_{\varepsilon})| + \varpi (\varepsilon))-\varpi (\varepsilon)/8\\
&\geq& \frac{|x^*_{\varepsilon} (x)| +| y^*(y_{\varepsilon})| +  \varpi (\varepsilon)}{2} -\frac{\lambda}{2} (2 + \varpi (\varepsilon))- \varpi (\varepsilon)/8\\
&=& \frac{|x^*_{\varepsilon} (x)| +| y^*(y_{\varepsilon})|}{2}+\varpi (\varepsilon)/4 +[\varpi (\varepsilon)/4 -\frac{\lambda}{2} (2+ \varpi (\varepsilon))-\varpi (\varepsilon)/8]\\
&=& \frac{|x^*_{\varepsilon} (x)| +| y^*(y_{\varepsilon})|}{2}+\varpi (\varepsilon)/4 \hspace{1mm} (\textnormal{Thanks to the definition of } \lambda).
\end{eqnarray*}
Thus, we have $B\geq A$, that is
\begin{eqnarray*}
w^*\circ \delta_{\Gamma(x,y^*)}(I_L([T_{x,y^*,\varepsilon}]\oplus e_{w^*,\varepsilon})) - \varpi (\varepsilon)/4 \geq v^*\circ \delta_{\Gamma(z,t^*)}(I_L([T_{x,y^*,\varepsilon}]\oplus e_{w^*,\varepsilon})).
\end{eqnarray*}
Hence, from Definition ~\ref{USP}, the complete pseudometric space $(\mathcal{C}_L,\gamma_\mathcal{P})$ has the $w^*\mathcal{USP}$ is $\tilde{\mathcal{Z}}^*$. 
\end{proof}

Combining Proposition ~\ref{PP1} applied with $V=X\times Y^*$, Lemma ~\Ref{lemC0} and Lemma ~\Ref{lemC}, we obtain our main result.

\begin{theorem} \label{thmprin} Let $X, Y, W$  be Banach spaces and $L$ be a nonempty symmetric and closed subset of $S_X\times S_{Y^*}$ having the property $(P)$. Let $(\mathcal{Z}, \|\cdot\|_\mathcal{Z})$ be a Banach space included in $(C_{b,e}(L, W), \|\cdot\|_{\infty})$, such that:

$(i)$ $\|\cdot\|_\mathcal{Z}\geq \|\cdot\|_{\infty}$.

$(ii)$ $\mathcal{Z}$ contains the space $[\mathcal{F}(X,Y)]\oplus W$ and is such that $\|[T]\oplus w\|_\mathcal{Z}\leq \|T\|\|w\|$ for all $(T,w)\in \mathcal{F}(X,Y)\times W$.

Then, for every $\Theta \in C_{b,e}(L, W)$ the set 
$$\mathcal{G}(\Theta):=\lbrace \Phi \in \mathcal{Z}: \Theta+\Phi \textnormal{  attains } d_H\textnormal{-strongly its maximum on } L\rbrace,$$
is the complement of a $\sigma$-porous subset of $(\mathcal{Z}, \|\cdot\|_{\mathcal{Z}})$. Moreover, we have the following version of the Bishop–Phelps–Bollobás theorem: given $\varepsilon >0$, there exists $\eta(\varepsilon)>0$ such that whenever $\Phi\in \mathcal{Z}$ with $\|\Phi\|_{\infty}=1$ and $(x,y^*)\in L$, satisfy 
$$\|\Phi(x,y^*)\|>1-\eta(\varepsilon),$$ 
there are $\Psi \in \mathcal{Z}$ with $\|\Psi\|_{\infty}=1$ and $(z,t^*)\in L$ such that 
$$\|\Psi(z,t^*)\|=1, \hspace{1mm} \|x-z\|+\|y^*-t^*\| < \varepsilon \textnormal{ and } \|\Psi-\Phi\|_{\infty}<\varepsilon.$$
\end{theorem}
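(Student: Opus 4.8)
The plan is to apply Proposition~\ref{PP1} with $V=X\times Y^*$ (equipped with the norm $\|(x,y^*)\|=\|x\|+\|y^*\|$), to the present data $W$, $L$ and $(\mathcal{Z},\|\cdot\|_\mathcal{Z})$. First I would check the structural hypotheses of Proposition~\ref{PP1} that do not involve the pseudometric: $V$ is a Banach space, $L$ is a symmetric closed subset of $V$ since $L\subseteq S_X\times S_{Y^*}\subseteq V$ and $L$ is symmetric and closed by assumption, and $(\mathcal{Z},\|\cdot\|_\mathcal{Z})$ is a Banach space included in $(C_{b,e}(L,W),\|\cdot\|_\infty)$ with $\|\cdot\|_\mathcal{Z}\geq\|\cdot\|_\infty$, which is exactly hypothesis~$(i)$. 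Thus the single remaining hypothesis to verify is that $(\mathcal{C}_L,\gamma_\mathcal{P})$ is a well defined complete pseudometric space having the $w^*USP$ in the dual $\widetilde{\mathcal{Z}}^*$, where $\widetilde{\mathcal{Z}}=I_L(\mathcal{Z})$.

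This remaining hypothesis splits into two parts, each furnished by one of the preceding lemmas. The first part---that $\gamma_\mathcal{P}$ is well defined and $(\mathcal{C}_L,\gamma_\mathcal{P})$ is a complete pseudometric subset of $\widetilde{\mathcal{Z}}^*$---is precisely the conclusion of Lemma~\ref{lemC0}. Its hypotheses are that $L$ has property~$(P)$, that $\|\cdot\|_\mathcal{Z}\geq\|\cdot\|_\infty$, and that $\mathcal{Z}$ contains $[\mathcal{F}(X,Y)]\oplus W$; all three hold here, the containment being a weaker consequence of hypothesis~$(ii)$. The second part---that $(\mathcal{C}_L,\gamma_\mathcal{P})$ has the $w^*USP$ in $\widetilde{\mathcal{Z}}^*$---is exactly the conclusion of Lemma~\ref{lemC}, whose hypotheses are property~$(P)$ for $L$, condition~$(i)$, and the full condition~$(ii)$ including the norm estimate $\|[T]\oplus w\|_\mathcal{Z}\leq\|T\|\|w\|$; these are literally the hypotheses of the present theorem.

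With every hypothesis of Proposition~\ref{PP1} now verified, I would invoke it to conclude at once that for each $\Theta\in C_{b,e}(L,W)$ the set $\mathcal{G}(\Theta)$ is the complement of a $\sigma$-porous subset of $(\mathcal{Z},\|\cdot\|_\mathcal{Z})$, and that the Bishop--Phelps--Bollob\'as conclusion holds. The only point requiring attention is the translation of the generic norm of $V$ into the product norm on $X\times Y^*$: writing a point of $L$ as $(x,y^*)$ and the approximating point as $(z,t^*)$, the estimate $\|x-y\|<\varepsilon$ of Proposition~\ref{PP1} becomes $\|x-z\|+\|y^*-t^*\|<\varepsilon$, which is exactly the form stated in the theorem, while the conditions $\|\Phi\|_\infty=1$, $\|\Phi(x,y^*)\|>1-\eta(\varepsilon)$, $\|\Psi(z,t^*)\|=1$ and $\|\Psi-\Phi\|_\infty<\varepsilon$ carry over verbatim. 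There is no genuine obstacle here: all the analytic content sits in Proposition~\ref{PP1} and in Lemmas~\ref{lemC0} and~\ref{lemC}, so the proof of the theorem reduces to the routine bookkeeping of matching their hypotheses and assembling their conclusions.
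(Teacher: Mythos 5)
Your proof is correct and follows exactly the paper's own argument: the paper likewise obtains Theorem~\ref{thmprin} by applying Proposition~\ref{PP1} with $V=X\times Y^*$, using Lemma~\ref{lemC0} to verify that $(\mathcal{C}_L,\gamma_\mathcal{P})$ is a well-defined complete pseudometric subset of $\widetilde{\mathcal{Z}}^*$ and Lemma~\ref{lemC} to verify the $w^*USP$. Your additional bookkeeping (matching hypotheses and translating the norm of $V$ into the product norm on $X\times Y^*$) is exactly the routine verification the paper leaves implicit.
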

\vskip5mm
Corollary \ref{Cor1} in the introduction is a particular case of  Theorem \ref{thmprin}  when applied with  $X$ a uniformly convex Banach space and $Y$ a uniformly smooth Banach space and $L=\Pi_\theta (X,Y)$ which has the property $(P)$ thanks to Proposition \ref{corex}, observing in this case that by the definitions, the notion of {\it "attains  $d_H$-strongly its maximum on $\Pi_\theta (X,Y)$"} is equivalent to the notion of  {\it "attains  strongly its $\theta$-numerical radius"}. 

We also obtain the following result.

\begin{corollary} \label{Cor2} Under the hypothesis of Theorem \ref{thmprin}, let $(\Theta_n)\subseteq \mathcal{Z}$ be a sequence. Then, for every $\varepsilon >0$, there exists $\Phi_\varepsilon \in \overline{[\mathcal{F}(X,Y)]\oplus W}^{\|\cdot\|_\mathcal{Z}}\subseteq \mathcal{Z}$ such that $\|\Phi_\varepsilon\|_\mathcal{Z}<\varepsilon$ and $\Theta_n+\Phi_\varepsilon$  attains  $d_H$-strongly its maximum on $ L$ for every $n\in \N$.
\end{corollary}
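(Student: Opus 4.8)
The plan is to apply Theorem \ref{thmprin} not to the whole space $\mathcal{Z}$, but to the closed subspace
$$\mathcal{Y}:=\overline{[\mathcal{F}(X,Y)]\oplus W}^{\|\cdot\|_\mathcal{Z}},$$
which will force the perturbation to land inside the prescribed closure, and then to amalgamate the countably many resulting $\sigma$-porous sets by a single Baire category argument. The first step is to check that $\mathcal{Y}$ itself satisfies the hypotheses of Theorem \ref{thmprin}. As a closed subspace of the Banach space $(\mathcal{Z},\|\cdot\|_\mathcal{Z})$, the space $\mathcal{Y}$ is complete and is included in $(C_{b,e}(L,W),\|\cdot\|_\infty)$; it inherits the inequality $\|\cdot\|_\mathcal{Y}=\|\cdot\|_\mathcal{Z}\geq\|\cdot\|_\infty$ from hypothesis $(i)$; it contains $[\mathcal{F}(X,Y)]\oplus W$ by its very construction; and the norm estimate $\|[T]\oplus w\|_\mathcal{Y}=\|[T]\oplus w\|_\mathcal{Z}\leq\|T\|\|w\|$ is exactly hypothesis $(ii)$. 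Hence $\mathcal{Y}$ fulfills $(i)$ and $(ii)$ with $L$ unchanged.

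Next, since each $\Theta_n$ lies in $\mathcal{Z}\subseteq C_{b,e}(L,W)$, Theorem \ref{thmprin} applied with ambient space $\mathcal{Y}$ yields, for every $n\in\N$, that the set
$$\mathcal{G}_n:=\lbrace \Phi\in\mathcal{Y}:\Theta_n+\Phi \textnormal{ attains } d_H\textnormal{-strongly its maximum on } L\rbrace$$
has complement $P_n:=\mathcal{Y}\setminus\mathcal{G}_n$ that is $\sigma$-porous in $\mathcal{Y}$. The union $P:=\bigcup_{n\in\N}P_n$ is then a countable union of $\sigma$-porous sets, which is again $\sigma$-porous (a countable union of countable unions of porous sets is a countable union of porous sets), and therefore of first Baire category in $\mathcal{Y}$, as recalled right after Definition \ref{prous}.

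Finally I would run the category argument. Because $\mathcal{Y}$ is complete it is a Baire space, so $\mathcal{Y}\setminus P=\bigcap_{n\in\N}\mathcal{G}_n$ is residual, and in particular dense, in $\mathcal{Y}$. The open ball $\mathring{B}_\mathcal{Y}(0;\varepsilon)$ is nonempty, hence it meets $\bigcap_{n\in\N}\mathcal{G}_n$; choosing any $\Phi_\varepsilon$ in this intersection gives $\|\Phi_\varepsilon\|_\mathcal{Z}<\varepsilon$, $\Phi_\varepsilon\in\mathcal{Y}=\overline{[\mathcal{F}(X,Y)]\oplus W}^{\|\cdot\|_\mathcal{Z}}$, and $\Theta_n+\Phi_\varepsilon$ attaining its maximum $d_H$-strongly on $L$ for every $n$, which is the claim. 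The whole argument is essentially bookkeeping once the ambient space $\mathcal{Y}$ is chosen correctly; the only points deserving attention are the verification that $\mathcal{Y}$ inherits $(i)$ and $(ii)$, and the stability of $\sigma$-porosity under countable unions, which is precisely what lets one category argument handle all $n$ at once. I do not expect any genuine obstacle beyond these routine verifications, since the subspace trick automatically delivers both the required membership in the closure and, via intersection with a small ball, the smallness of the perturbation.
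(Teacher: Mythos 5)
Your proposal is correct and follows essentially the same route as the paper: apply Theorem \ref{thmprin} to the closed subspace $\overline{[\mathcal{F}(X,Y)]\oplus W}^{\|\cdot\|_\mathcal{Z}}$ (so that the perturbation automatically lies in the required closure), observe that the countable intersection $\bigcap_{n}\mathcal{G}(\Theta_n)$ is the complement of a $\sigma$-porous set and hence dense, and pick $\Phi_\varepsilon$ in its intersection with the open $\varepsilon$-ball. Your explicit verification of hypotheses $(i)$--$(ii)$ for the subspace and the Baire-category justification of density are exactly the details the paper leaves implicit.
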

\begin{proof}  We apply Theorem \ref{thmprin}  to the closed subspace $\mathcal{Z}_1:=\overline{[\mathcal{F}(X,Y)]\oplus W}^{\|\cdot\|_\mathcal{Z}}$ of $(\mathcal{Z}, \|\cdot\|_\mathcal{Z})$. Thus, for each $n\in \N$, the set 
$$\mathcal{G}(\Theta_n):=\lbrace \Phi \in \mathcal{Z}_1: \Theta_n+\Phi \textnormal{  attains } d_H\textnormal{-strongly its maximum on } L\rbrace,$$
is the complement of a $\sigma$-porous subset of $(\mathcal{Z}_1, \|\cdot\|_{\mathcal{Z}})$.  Thus,  $\cap_{n\in \N} \mathcal{G}(\Theta_n)$ is also the complement of a $\sigma$-porous subset of $\mathcal{Z}_1$, in particular it is a dense subset. Hence, for every $\varepsilon >0$, there exists $\Phi_\varepsilon \in \cap_{n\in \N} \mathcal{G}(\Theta_n)\subseteq \overline{[\mathcal{F}(X,Y)]\oplus W}^{\|\cdot\|_\mathcal{Z}}$ such that $\|\Phi_\varepsilon\|_\mathcal{Z}<\varepsilon$ and $\Theta_n+\Phi_\varepsilon$  attains  $d_H$-strongly its maximum on $ L$ for every $n\in \N$.
\end{proof}
The above corollary applies in particular to operators attaining their {\it $\theta$-numerical radius"} in Corollary ~\ref{Cor1}. 

\subsection{Applications to norm attaining operators.} 
Now, we give some applications to the subject of operators attaining their norm. The density of the set of operators attaining their norm in $\mathcal{L}(X,Y)$, under some assymptions on $X$ or $Y$, is a well-known result (see for instance \cite{Aco0} and references therein). We extend below the density by being the complement of $\sigma$-porous set. The first result in this direction (for $\sigma$-porosity) has been proved in \cite[Corollary 4.7]{Ba} when $X$ is a Banach space having  property $(\alpha)$ introduced by Schachermayer in \cite{Sch} (see Corollary \ref{CorA} below for a general version). In  Corollary \ref{CorB} we give a new result concerning the set of operators $T$ whose adjoint $T^*$ attains its norm. This result applies in particular where the space $Y$ has property $(\beta)$ introduced by J. Lindenstrauss \cite{Lj}. 

Recall that an operator $T\in \mathcal{L}(X,Y)$ attains its norm if there exists $x_0\in S_X$ such that $\|T\|=\|T(x_0)\|$.  We say that $T$ attains strongly its norm on a subset $A \subseteq S_X$ at $x_0\in A$, if the map $x\mapsto \|T(x)\|$ attains strongly its maximum on $A$ at $x_0$ (that is, $\|x_n -x_0\|\to 0$, whenever $(x_n)\subseteq A$ and $\|T(x_n)\|\to \|T(x_0)\|$)  and that $\|T\|=\sup \lbrace \|T(x)\|: x \in A \rbrace$.

\begin{corollary} \label{CorA} Let $X$ be a Banach space satisfying the following property: 

$(i)$ there exists $A\subseteq S_X$ such that the absolute convex hull of $A$ is dense in the unit ball of $X$,

$(ii)$ for every $\varepsilon >0$ small enough, there exists $\varpi_X(\varepsilon) >0$ satisfying for every $x\in A$ there exists 
 $x^*_{x,\varepsilon} \in B_{X^*}$ such that  
\begin{eqnarray*}
\forall z\in A: \|z-x\|\geq \varepsilon \Longrightarrow x^*_{x,\varepsilon}(x) -\varpi_X(\varepsilon) \geq |x^*_{x,\varepsilon}(z)|.
\end{eqnarray*}
Then, for every Banach space $W$ and for any closed subspace $\mathcal{A}$ of $\mathcal{L}(X,W)$, containing the space $\mathcal{F}(X,W)$ of finite rank operators, we have: for every $S \in \mathcal{L}(X,W)$ the set 
$$\mathcal{G}(S):=\lbrace T \in \mathcal{A}\hspace{1mm}| \hspace{2mm}  S+T \textnormal{  attains } \textnormal{strongly its norm on } A \rbrace,$$
is the complement of a $\sigma$-porous subset of $\mathcal{A}$. Moreover, we have the following version of the Bishop–Phelps–Bollobás theorem: given $\varepsilon >0$, there exists $\eta(\varepsilon)>0$ such that whenever $T\in \mathcal{A}$ with $\|T\|=1$ and $x\in A$, satisfy 
$$\|T(x)\|>1-\eta(\varepsilon),$$ 
there are $S \in \mathcal{A}$ with $\|S\|=1$ and $z \in A$ such that 
$$\|S(z)\|=1, \hspace{1mm} \|x-z\| < \varepsilon \textnormal{ and } \|S-T\|<\varepsilon.$$

\end{corollary}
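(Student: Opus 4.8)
The plan is to deduce Corollary~\ref{CorA} from the main result Theorem~\ref{thmprin} by the special choice $Y=\R$, which converts the one–variable norm problem into an instance of the two–variable $d_H$–framework. Set $V=X\times\R$, so $\R^*=\R$, $S_{\R}=\{-1,1\}$ and $B_{\R}=[-1,1]$, and to each $T\in\mathcal{L}(X,W)$ associate the map $\Phi_T:(x,y^*)\mapsto y^*\,T(x)$; it is bounded, continuous and satisfies $\Phi_T(-x,-y^*)=\Phi_T(x,y^*)$, hence $\Phi_T\in C_{b,e}(L,W)$ for any symmetric closed $L\subseteq S_X\times\{-1,1\}$. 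The point of the extra sign coordinate $y^*\in\{-1,1\}$ is that it lets the (symmetric, sign–insensitive) Hausdorff distance recover the honest norm of $X$, and this is really the conceptual crux of the argument. Concretely I take
$$L:=\big(A\times\{1\}\big)\cup\big((-A)\times\{-1\}\big),$$
which is symmetric; I assume $A$ closed (as is the case in the situations where the corollary is applied, e.g. property $(\alpha)$), so that $L$ is closed as the theorem requires. Using the elementary bound $\|x-z\|\le 2\le\|x+z\|+2$ for $x,z\in S_X$, one gets the key identities
$$d_H(\Gamma(x,1),\Gamma(z,1))=\min(\|x-z\|,\|x+z\|+2)=\|x-z\|,$$
and similarly $d_H(\Gamma(x,1),\Gamma(z,-1))=\|x+z\|$. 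Thus on the slice $A\times\{1\}$ the Hausdorff distance is exactly $\|\cdot\|$, so $d_H$–strong maximality over $L$ will translate into genuine one–sided strong norm attainment over $A$.

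Next I would check that $L$ has property~$(P)$ using only hypothesis~$(ii)$. Fix $\varepsilon>0$; by symmetry it suffices to treat a point $(x,1)$ with $x\in A$. Take $x^*_\varepsilon:=x^*_{x,\varepsilon}\in B_{X^*}$ from hypothesis~$(ii)$, put $y_\varepsilon:=1\in B_{\R}$ and $\varpi(\varepsilon):=\varpi_X(\varepsilon)$. For any $(z,t^*)\in L$ with $d_H(\Gamma(x,1),\Gamma(z,t^*))\ge\varepsilon$, the identities above show $\|x-z'\|\ge\varepsilon$ where $z'=t^*z\in A$; since $y^*(y_\varepsilon)=1$, $|t^*(y_\varepsilon)|=1$ and $|x^*_\varepsilon(z)|=|x^*_\varepsilon(z')|$, hypothesis~$(ii)$ gives $x^*_\varepsilon(x)-\varpi(\varepsilon)\ge|x^*_\varepsilon(z')|$, i.e. $x^*_\varepsilon(x)+y^*(y_\varepsilon)-\varpi(\varepsilon)\ge|x^*_\varepsilon(z)|+|t^*(y_\varepsilon)|$, which is exactly property~$(P)$.

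Then I set up the space $\mathcal{Z}:=\{\Phi_T:T\in\mathcal{A}\}$ with $\|\Phi_T\|_{\mathcal{Z}}:=\|T\|$. By density of the absolute convex hull of $A$ (hypothesis~$(i)$) one has $\|\Phi_T\|_\infty=\sup_{x\in A}\|T(x)\|=\|T\|$, so $T\mapsto\Phi_T$ is a linear isometry of the closed subspace $\mathcal{A}$ onto $\mathcal{Z}$; hence $\mathcal{Z}$ is Banach with $\|\cdot\|_{\mathcal{Z}}=\|\cdot\|_\infty$, giving condition~$(i)$ of Theorem~\ref{thmprin}. For condition~$(ii)$, note that for $T\in\mathcal{F}(X,\R)=X^*$ and $w\in W$ one has $[T]\oplus w=\Phi_{T(\cdot)w}$, a rank-one (hence finite rank) operator in $\mathcal{A}$ with $\|[T]\oplus w\|_{\mathcal{Z}}=\|T(\cdot)w\|=\|T\|\,\|w\|$; thus $[\mathcal{F}(X,\R)]\oplus W\subseteq\mathcal{Z}$ with the required estimate. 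Applying Theorem~\ref{thmprin} with $\Theta:=\Phi_S$ shows that $\{\Phi_T:\Phi_{S+T}\text{ attains }d_H\text{-strongly its maximum on }L\}$ is the complement of a $\sigma$-porous set in $\mathcal{Z}\cong\mathcal{A}$.

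Finally I translate back, using $\Phi_S+\Phi_T=\Phi_{S+T}$ and the distance identities. A $d_H$–strong maximizer of $\Phi_{S+T}$ on $L$ may, after the symmetry $x_0\mapsto-x_0$, be taken as $(x_0,1)$ with $x_0\in A$ and $\|(S+T)(x_0)\|=\sup_{x\in A}\|(S+T)(x)\|=\|S+T\|$; since $d_H(\Gamma(x_n,1),\Gamma(x_0,1))=\|x_n-x_0\|$, strong $d_H$–maximality forces $\|x_n-x_0\|\to0$ for every norm–maximizing $(x_n)\subseteq A$, i.e. $S+T$ attains strongly its norm on $A$, and the converse is obtained by splitting a maximizing sequence in $L$ according to its sign coordinate. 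This identifies $\mathcal{G}(S)$ with $\mathcal{G}(\Phi_S)$ and yields the porosity statement. For the Bishop–Phelps–Bollobás part, I apply the corresponding conclusion of Theorem~\ref{thmprin} to $\Phi_T$ at the point $(x,1)$: for $\varepsilon<2$ the constraint $\|x-z\|+|1-t^*|<\varepsilon$ forces $t^*=1$, so the output is $(z,1)$ with $z\in A$, $\|x-z\|<\varepsilon$, while $\|\Psi-\Phi\|_\infty=\|S'-T\|$ and $\|S'(z)\|=\|S'\|=1$, which is exactly the asserted statement. The one point demanding genuine care is precisely this matching — arranging that the symmetric $d_H$-framework detects one-sided norm attainment on $A$ — together with the closedness of $L$; both are secured by the auxiliary coordinate $y^*\in\{-1,1\}$ and the identity $d_H(\Gamma(x,1),\Gamma(z,1))=\|x-z\|$.
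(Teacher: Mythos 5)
Your proposal is correct and follows essentially the same route as the paper's own proof: the same reduction to Theorem~\ref{thmprin} with $Y=\R$, the same set $L=(A\times\{1\})\cup-(A\times\{1\})$, the same isometric identification $T\mapsto\Phi_T$ of $\mathcal{A}$ with $\mathcal{Z}$ via hypothesis $(i)$, the same key identity $d_H(\Gamma(x,1),\Gamma(z,1))=\|x-z\|$, and property $(P)$ deduced from hypothesis $(ii)$. You merely spell out details the paper leaves implicit (the explicit property-$(P)$ check, the back-translation of $d_H$-strong attainment, and the closedness of $A$ needed for $L$ to be closed, which the paper tacitly assumes).
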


\begin{proof} We  apply Theorem \ref{thmprin} with $Y=\R$, the closed symmetric set $$L:=(A\times \lbrace 1\rbrace) \cup - (A\times \lbrace 1\rbrace)\subseteq S_X \times S_{\R^*}=S_X \times \lbrace \pm 1\rbrace,$$ and $$\mathcal{Z}:=\lbrace \tilde{T}=[(x, \lambda)\mapsto \lambda T(x) \in W]: (x, \lambda) \in X\times \R, T\in \mathcal{A} \rbrace \subseteq C_{b,e}(L, W),$$ equipped with the norm 
\begin{eqnarray*} \|\tilde{T}\| &:=&\sup \lbrace \|\tilde{T}(x, \lambda)\|: (x, \lambda)\in L \rbrace \\
&=& \sup \lbrace \|T(x)\|: x \in A \rbrace\\
&=& \|T\| \hspace{1mm} (\textnormal{by using } (i)).
\end{eqnarray*}
We see easily that $[\mathcal{F}(X, \R)]\oplus W \subseteq \mathcal{Z}$, since $\mathcal{F}(X, W)\subseteq \mathcal{B}$ and that $\mathcal{Z}$ is isometrically isomorphic to $\mathcal{A}$.  Notice that, for all $(\xi x, \xi), (\tau z, \tau)\in L$, where $x, z\in A$ and $\xi, \tau \in \lbrace \pm 1\rbrace$, we have that $d_H(\Gamma(\xi x, \xi), \Gamma(\tau z, \tau))=\|x -z\|$. It remains just to see that the set $L$ has the property $P$ which is a trivial consequence of $(ii)$.
\end{proof}
\begin{Exemp} Corollary \ref{CorA}, applies in the following cases:

$(1)$  $X=(l^1,\|\cdot\|)$, with $A=\lbrace e_n: n\in \N \rbrace$, where $(e_n)$ denotes the canonical basis of $l^1$.  

$(2)$ For $1<p<+\infty$, denote $S^+_{l^p}:=\lbrace (x_n)\in S_{l^p}: x_n\geq 0, \forall n \in \N\rbrace$, the intersection of the the unit sphere of $l^p$ with the positive cone. We see using the uniform convexity of $l^p$ that for every $\varepsilon \in (0,2]$, there exists $\delta(\varepsilon) >0$ (the modulus of uniform convexity of $l^p$) satisfying for every $x=(x_n)\in S^+_{l^p}$, there exists $x^*=(x^{p-1}_n)\in B_{l^q}$ ($\frac{1}{q}+\frac{1}{p}=1$) such that $\forall z=(z_n)\in S^+_{l^p}$:
\begin{eqnarray*}
 \|z-x\|_p \geq \varepsilon \Longrightarrow x^*(x)-2\delta(\varepsilon)=1-2\delta(\varepsilon) \geq \sum_{n=0}^{+\infty} x^{p-1}_n z_n =|x^*(z)|.
\end{eqnarray*}
Set $D:=\overline{\textnormal{co}}(\textnormal{bal}(S^+_{l^p}))$, the closed absolute convex hull of $S^+_{l^p}$ and define the Minkowski functional of $D$ which is a norm,
$$N_p(x)=\inf \lbrace \lambda >0: x \in \lambda D\rbrace.$$
 We see easily that $\frac{1}{2} B_{l^p}\subseteq D \subseteq B_{l^p}$, so that $\|\cdot\|_p \leq N_p(\cdot)\leq 2\|\cdot\|_p$, where $\|\cdot\|_p$ denotes the classical norm of $l^p$. Then, Corollary \ref{CorA}, applies for $X=(l^p, N_p)$, $1<p<+\infty$ with $A:=S^+_{l^p}$.

\end{Exemp}

\begin{corollary} \label{CorB} Let $Y$ be a Banach space satisfying the following property: 

$(i)$ there exists $B\subseteq S_{Y^*}$ such that $\|y\|=\sup_{y^*\in B}|y^*(y)|$ for all $y\in Y$,

$(ii)$ for every $\varepsilon >0$ small enough, there exists $\varpi_Y(\varepsilon) >0$ satisfying for every $y^*\in B$ there exists 
 $y_{y^*,\varepsilon} \in B_{Y}$ such that  
\begin{eqnarray*}
\forall t^*\in B: \|t^*- y^*\|\geq \varepsilon \Longrightarrow y^*(y_{y^*,\varepsilon}) -\varpi_Y(\varepsilon) \geq |t^*(y_{y^*,\varepsilon})|.
\end{eqnarray*}
Then, for every Banach space $W$ and for any closed subspace $\mathcal{B}$ of $\mathcal{L}(W,Y)$, containing the space $\mathcal{F}(W, Y)$ of finite rank operators, we have: for every $S \in \mathcal{L}(W,Y)$ the set 
$$\mathcal{G}(S):=\lbrace T \in \mathcal{B}\hspace{1mm}| \hspace{2mm}  S^*+T^*: Y^*\to W^* \textnormal{  attains } \textnormal{strongly its norm on } B \rbrace,$$
is the complement of a $\sigma$-porous subset of $\mathcal{B}$. Moreover, we have the following version of the Bishop–Phelps–Bollobás theorem: given $\varepsilon >0$, there exists $\eta(\varepsilon)>0$ such that whenever $T\in \mathcal{B}$ with $\|T\|=\|T^*\|=1$ and $y^*\in B$, satisfy 
$$\|T^*(y^*)\|>1-\eta(\varepsilon),$$ 
there are $S \in \mathcal{B}$ with $\|S\|=\|S^*\|=1$ and $t^* \in B$ such that 
$$\|S^*(t^*)\|=1, \hspace{1mm} \|t^*-y^*\| < \varepsilon \textnormal{ and } \|S-T\|=\|S^*-T^*\|<\varepsilon.$$

\end{corollary}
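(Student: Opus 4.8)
The plan is to derive Corollary \ref{CorB} from Theorem \ref{thmprin} by transposing everything to the dual side, exactly in the spirit of the proof of Corollary \ref{CorA}, but now using adjoints. The key observation is that for $T \in \mathcal{L}(W,Y)$ the adjoint $T^* : Y^* \to W^*$ is a bounded linear operator, and $T \mapsto T^*$ is an isometry onto its range; moreover $T \in \mathcal{F}(W,Y)$ forces $T^* \in \mathcal{F}(Y^*,W^*)$. Hypothesis $(i)$, which says $\|y\| = \sup_{y^* \in B}|y^*(y)|$, is precisely a norming condition guaranteeing that $\|T^*(y^*)\|_{W^*} = \sup_{w \in B_W}|y^*(Tw)|$ is controlled and, via the absolute convex hull argument as in Corollary \ref{CorA}$(i)$, that $\sup_{y^* \in B}\|T^*(y^*)\| = \|T^*\| = \|T\|$. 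This reduces the statement about $T^*$ attaining its norm on $B$ to a norm-attainment statement for the operators $T^*$ on the norming set $B \subseteq S_{Y^*}$.

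First I would set up the application of Theorem \ref{thmprin} with the roles arranged so that $B$ plays the part of the set ``$A$'' in Corollary \ref{CorA}, but with the domain space being $Y^*$. Concretely, take $X := Y^*$, take the target space to be $W^*$ (so that in the notation of the theorem the second Banach space is $\mathbb{R}$ and the value space is $W^*$), and set
\begin{eqnarray*}
L := (B \times \lbrace 1 \rbrace) \cup -(B \times \lbrace 1 \rbrace) \subseteq S_{Y^*} \times \lbrace \pm 1 \rbrace,
\end{eqnarray*}
which is closed and symmetric. I would then let $\mathcal{Z}$ be the space of maps $\widetilde{T^*} : (y^*,\lambda) \mapsto \lambda\, T^*(y^*)$ for $T \in \mathcal{B}$, equipped with the norm $\|\widetilde{T^*}\| = \sup\lbrace \|T^*(y^*)\| : y^* \in B \rbrace = \|T^*\| = \|T\|$, the last equalities following from hypothesis $(i)$. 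As in Corollary \ref{CorA}, one checks $d_H(\Gamma(\xi y^*, \xi), \Gamma(\tau t^*, \tau)) = \|y^* - t^*\|$ for $\xi,\tau \in \lbrace\pm 1\rbrace$ and $y^*,t^* \in B$, so that property $(P)$ for $L$ is exactly the separation condition $(ii)$ of the corollary. Finally I would verify that $\mathcal{Z}$ is isometrically isomorphic to $\mathcal{B}$ and contains $[\mathcal{F}(Y^*,\mathbb{R})]\oplus W^*$, using that $\mathcal{B} \supseteq \mathcal{F}(W,Y)$ and that finite rank operators transpose to finite rank operators; then Theorem \ref{thmprin} applies verbatim and yields both the $\sigma$-porosity conclusion and the Bishop--Phelps--Bollob\'as statement for $T^*$ on $B$.

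\textbf{The main obstacle} will be the bookkeeping connecting the abstract conclusion of Theorem \ref{thmprin}---phrased for the maps $\widetilde{T^*}$ attaining their maximum $d_H$-strongly on $L$---with the explicit statement about $\|S^* - T^*\| < \varepsilon$ and $\|S-T\| < \varepsilon$ in the corollary. The delicate point is that Theorem \ref{thmprin} produces a perturbation $\Psi \in \mathcal{Z}$ close to $\widetilde{T^*}$ in the $\mathcal{Z}$-norm, and I must argue that this $\Psi$ is itself of the form $\widetilde{S^*}$ for some genuine $S \in \mathcal{B}$, i.e. that it comes from an honest adjoint; this is where the isometric identification $T \mapsto \widetilde{T^*}$ and the completeness of $\mathcal{B}$ must be invoked carefully, together with the fact that $\|S-T\| = \|S^* - T^*\|$. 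One must also confirm that ``attains $d_H$-strongly its maximum on $L$'' translates, after the symmetry collapse $\Gamma$, into ``$T^*$ attains strongly its norm on $B$'' in the sense defined just before the corollary. I expect that, once the identification $\mathcal{Z} \cong \mathcal{B}$ is in hand and hypotheses $(i)$ and $(ii)$ are matched to the norming condition and to property $(P)$ respectively, the remaining verifications are routine and mirror those already carried out in the proof of Corollary \ref{CorA}.
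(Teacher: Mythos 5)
Your overall strategy (reduce to Theorem \ref{thmprin} via the isometry $T \mapsto \widetilde{T^*}$, match hypothesis $(ii)$ of the corollary to property $(P)$, and hypothesis $(i)$ to the identity $\sup_{y^*\in B}\|T^*(y^*)\|=\|T\|$) is the right one, but your orientation of the product breaks hypothesis $(ii)$ of Theorem \ref{thmprin}. You apply the theorem with first space $X:=Y^*$ and second space $\R$, so that $L\subseteq S_{Y^*}\times S_{\R}$. With that choice, the theorem requires $\mathcal{Z}\supseteq[\mathcal{F}(Y^*,\R)]\oplus W^*$; but $\mathcal{F}(Y^*,\R)=(Y^*)^*=Y^{**}$, so this class consists of \emph{all} maps $(y^*,\lambda)\mapsto\lambda\,\xi(y^*)\,w^*$ with $\xi\in Y^{**}$, $w^*\in W^*$. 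Your $\mathcal{Z}$, being the set of maps $(y^*,\lambda)\mapsto\lambda T^*(y^*)$ with $T\in\mathcal{B}$, only produces such maps when $\xi$ is weak$^*$-continuous on $Y^*$, i.e. when $\xi$ lies in the canonical image of $Y$ in $Y^{**}$: ``finite rank operators transpose to finite rank operators'' gives you $[Y]\oplus W^*$, not $[Y^{**}]\oplus W^*$. For non-reflexive $Y$ the required containment genuinely fails, and it cannot be repaired by enlarging $\mathcal{Z}$, since the porosity conclusion must live in a space isometric to $\mathcal{B}$. Concretely, take $Y=c_0$ (one of the paper's own intended examples), $B=\lbrace\pm e^*_n:n\in\N\rbrace\subseteq S_{\ell^1}$, $\xi=(1,1,1,\dots)\in\ell^\infty=Y^{**}$ and $w^*\neq 0$: if some $T\in\mathcal{L}(W,c_0)$ satisfied $T^*(e^*_n)=\xi(e^*_n)w^*=w^*$ for all $n$, then for every $w\in W$ one would have $(Tw)_n=w^*(w)$ for all $n$, contradicting $Tw\in c_0$ unless $w^*=0$; so $[\xi]\oplus w^*$ restricted to $L$ does not belong to your $\mathcal{Z}$. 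Note that this is exactly the point where mirroring Corollary \ref{CorA} is deceptive: there the domain space is the primal $X$, $\mathcal{F}(X,\R)=X^*$, and every rank-one map $x^*(\cdot)w$ does lie in $\mathcal{A}$; here the domain of the adjoints is a dual space, and rank-one operators on $Y^*$ built from $Y^{**}\setminus Y$ are not adjoints.

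The paper avoids this by the opposite orientation: it applies Theorem \ref{thmprin} with $X:=\R$ and with $Y$ kept as the corollary's $Y$, taking $L:=(\lbrace 1\rbrace\times B)\cup-(\lbrace 1\rbrace\times B)\subseteq S_{\R}\times S_{Y^*}$ and $\mathcal{Z}:=\lbrace(\lambda,y^*)\mapsto\lambda T^*(y^*):T\in\mathcal{B}\rbrace\subseteq C_{b,e}(L,W^*)$. Then the finite-rank class the theorem demands is $[\mathcal{F}(\R,Y)]\oplus W^*$, and $\mathcal{F}(\R,Y)\cong Y$: each map $(\lambda,y^*)\mapsto\lambda y^*(y)w^*$ is exactly $\widetilde{T}$ for the rank-one operator $T=w^*(\cdot)y\in\mathcal{F}(W,Y)\subseteq\mathcal{B}$, so condition $(ii)$ of the theorem holds. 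Property $(P)$ is then verified with separating pairs in $B_{\R^*}\times B_Y$, which is precisely what hypothesis $(ii)$ of the corollary supplies directly (in your orientation you had to pass through $B_{Y^{**}}$ via the canonical embedding, which did work; it is the finite-rank condition, not property $(P)$, that forces the paper's asymmetric choice). The rest of your bookkeeping --- the isometry $\mathcal{Z}\cong\mathcal{B}$ coming from hypothesis $(i)$ and completeness of $\mathcal{B}$, the identity $d_H(\Gamma(\xi,\xi y^*),\Gamma(\tau,\tau t^*))=\|t^*-y^*\|$ on $L$, and the translation of $d_H$-strong attainment into strong norm attainment of $T^*$ on $B$ --- is correct and matches the paper.
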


\begin{proof} We apply Theorem \ref{thmprin} with $X=\R$, the closed symmetric set $$L:=(\lbrace 1\rbrace \times B) \cup - (\lbrace 1\rbrace \times B)\subseteq S_{\R} \times S_{Y^*}=\lbrace \pm 1\rbrace \times S_{Y^*},$$ and 
\begin{eqnarray*}
\mathcal{Z}&:=&\lbrace \tilde{T}=[(\lambda, y^*)\mapsto \lambda y^*\circ T=\lambda T^*(y^*) \in W^*]: (\lambda, y^*) \in \R \times Y^*, T\in \mathcal{B} \rbrace \\
& \subseteq & C_{b,e}(L, W^*),
\end{eqnarray*}
 equipped with the norm 
\begin{eqnarray*} \|\tilde{T}\| &:=&\sup \lbrace \|\tilde{T}(\lambda,y^*)\|: (\lambda, y^*)\in L \rbrace \\
&=& \sup \lbrace \|T^*(y^*)\|: y^* \in B \rbrace\\
&=& \sup \lbrace \sup_{w\in S_{W}}|T^*(y^*)(w)|: y^* \in B \rbrace\\
&=& \sup_{w\in S_{W}} \sup \lbrace |y^*(T(w))|: y^* \in B \rbrace\\
&=& \|T\| \hspace{1mm} (\textnormal{by using } (i)).
\end{eqnarray*}
We see easily that $[\mathcal{F}(\R, Y)]\oplus W^* \subseteq \mathcal{Z}$, since $\mathcal{F}(W, Y)\subseteq \mathcal{B}$ and that  $\mathcal{Z}$ is isometrically isomorphic to $\mathcal{B}$.  Notice that,  for all $(\xi, \xi y^*), (\tau, \tau t^*)\in L$, where $y^*, t^*\in B$ and $\xi, \tau \in \lbrace \pm 1\rbrace$, we have that $d_H(\Gamma(\xi, \xi y^*), \Gamma(\tau, \tau t^*))=\|t^*- y^*\|$. It remains just to see that the set $L$ has the property $P$ which is a trivial consequence of $(ii)$.
\end{proof}
\begin{Exemp} Corollary \ref{CorB}, applies in particular to $Y=c_0$ or $l^\infty$.
\end{Exemp}
Notice the following well-known fact: a bounded linear operator  $T\in \mathcal{L}(W,Y)$ attains its norm if and only if $T^*\in \mathcal{L}(Y^*,W^*)$ attains its norm and there is $y^*\in S_{Y^*}$ such that $\|T^*y^*\|=\|T^*\|$ with $T^*y^*$ attains its norm at some point $w\in S_W$. In this case, $T$ attains its norm at $w\in S_W$. We can exploit the result in \cite[Theorem 3.15]{JMZ} together with Corollary \ref{CorB} to obtain other examples of the risiduality of operators attaining their norm. 

\subsection*{Acknowledgements}
This research has been conducted within the FP2M federation (CNRS FR 2036) and  SAMM Laboratory of the University Paris Panthéon-Sorbonne.

%%%%%%%%%%%%%%%%%%%%%%%%%%%%%%%
%%%%%%%%%%%%%%%%%%%%%%%%%%%%%%%%
\bibliographystyle{amsplain}

\end{document}